 \newtheorem{theorem}{Theorem}[section]
 \newtheorem{corollary}[theorem]{Corollary}
 \newtheorem{lemma}[theorem]{Lemma}
 \newtheorem{proposition}[theorem]{Proposition}
 \numberwithin{equation}{section}
 \newcommand{\R}{\mathbb{R}}
 \newcommand{\fsep}{\hspace*{\fill}}
\begin{document}

%-------------------------------------------------------------------------
% editorial commands: to be inserted by the editorial office
%
%\firstpage{1} \volume{228} \Copyrightyear{2004} \DOI{003-0001}
%
%
%\seriesextra{Just an add-on}
%\seriesextraline{This is the Concrete Title of this Book\br H.E. R and S.T.C. W, Eds.}
%
% for journals:
%
%\firstpage{1}
%\issuenumber{1}
%\Volumeandyear{1 (2004)}
%\Copyrightyear{2004}
%\DOI{003-xxxx-y}
%\Signet
%\commby{inhouse}
%\submitted{March 14, 2003}
%\received{March 16, 2000}
%\revised{June 1, 2000}
%\accepted{July 22, 2000}
%
%
%
%---------------------------------------------------------------------------
%Insert here the title, affiliations and abstract:
%

\title[Polygons with Parallel Opposite Sides]
  {\centerline{Polygons with Parallel Opposite Sides}}

%----------Author 1
\author[M.Craizer]{Marcos Craizer}

\address{%
Departamento de Matem\'{a}tica- PUC-Rio\br
Rio de Janeiro\br
Brazil}

\email{craizer@puc-rio.br}

\thanks{The first and second authors want to thank CNPq for financial support during the preparation of this manuscript.}
%----------Author 2
\author[R.C.Teixeira]{Ralph C.Teixeira}
\address{Departamento de Matem\'{a}tica Aplicada- UFF \br
Niter\'oi\br
Brazil}
\email{ralph@mat.uff.br}

%----------Author 3
\author[M.A.H.B. da Silva]{Moacyr A.H.B. da Silva}
\address{Centro de Matem\'{a}tica Aplicada- FGV \br
Rio de Janeiro\br
Brazil}
\email{moacyr@fgv.br}
%----------classification, keywords, date

\subjclass{ 53A15}

\keywords{ discrete area evolute, discrete central symmetry set, discrete area parallels, discrete equidistants}

\date{August 14, 2012}
%----------additions
%\dedicatory{To my boss}
%%% ----------------------------------------------------------------------

\begin{abstract}
In this paper we consider convex planar polygons with parallel opposite sides. This type of polygons can be regarded as discretizations
of  closed convex planar curves by taking tangent lines at samples with pairwise parallel tangents. For such polygons,
we define discrete versions of the area evolute, central symmetry set, equidistants
and area parallels and show that they behave quite similarly to their smooth counterparts. 
\end{abstract}

%%% ----------------------------------------------------------------------
\maketitle
%%% ----------------------------------------------------------------------
%\tableofcontents

%%%%%%%%%%%%%%%%%%%%%%%%%%%%%%%%%%%%%%%%%%%%%%%%%%%%%%%%%%%%%%%%%%%%%%%%%%%%%%%%

\section{Introduction}

In the emerging field of discrete differential geometry, one tries to define discrete counterparts of concepts from 
differential geometry that preserve most of the properties of the smooth case. The idea is to discretize not just the equations, but the whole theory (\cite{Bobenko08}).
Besides being important in computer applications, 
good discrete models may shed light on some aspects of the theory that remain hidden in the smooth context. It is also a common belief
that a good discrete counterpart of a differential geometry concept leads to efficient numerical algorithms for computing it.

There are several affine symmetry sets and evolutes associated with closed planar curves. 
In \cite{Craizer12}, we have considered convex equal area polygons, which can be regarded as discretizations of closed convex planar curves
by uniform sampling with respect to affine arc-length. For this type of polygons, discrete notions of affine normal and curvature, affine evolute,
parallels and affine distance symmetry set are quite natural.

In this paper we consider convex planar polygons with parallel opposite sides. This type of polygons can be regarded as discretizations
of  closed convex planar curves by taking tangent lines at samples with pairwise parallel tangents. For such polygons,
we shall define discrete notions of the area evolute, central symmetry set, equidistants
and area parallels and show that they behave quite similarly to the smooth case.

Given a closed convex planar smooth curve, the {\it area evolute} is defined as the locus 
of midpoints of chords with parallel tangents, while the {\it central symmetry set} is the envelope of chords with parallel tangents (\cite{Giblin08},\cite{Janeczko96}). 
It is well-known that the Area Evolute and Central Symmetry Set have the following properties: 
\begin{itemize}
\item For a point $x$ inside the curve, consider chords with $x$ as midpoint and denote by $N(x)$ the number of such chords.
Then $N(x)$ changes by $2$ when you traverse the Area Evolute (\cite{Giblin08}). 
\item Both Area Evolute and Central Symmetry Set have an odd number of cusps, at least three, and the number of cusps of the Central Symmetry Set is at least the number of cusps of the Area Evolute (\cite{Holtom99},\cite{Rios11}). 
\item Both Area Evolute and Central Symmetry Set reduce to a point if and only if  the curve is symmetric with respect to a point (\cite{Meyer91}).
\end{itemize}

For a convex polygon with parallel opposite sides, we call great diagonals the lines connecting opposite vertices and define the {\it area evolute}  and {\it central symmetry set} as follows: 
the Central Symmetry Set is the polygonal line whose vertices are the intersection of great diagonals, while 
the Area Evolute is the polygonal line connecting midpoints of the same great diagonals.  We also define the notion of cusps for these polygonal lines and 
then prove that all the above mentioned properties remain true.

An equidistant of a closed convex planar smooth curve is the locus of points that belong to a parallel tangents chord with a fixed ratio with respect
to its extremities.  The following property is well-known:
\begin{itemize}
\item  The Central Symmetry Set is the locus of cusps of equidistants (\cite{Giblin08}).  
\end{itemize}
We are not aware of any work concerning the set of self-intersections of equidistants.

We define the equidistants  of a convex polygon with parallel opposite sides as the polygon whose vertices are along the great diagonals with a fixed ratio with respect to the original vertices.
%Observe that each polygon ${\mathcal P}_{\lambda}$ of the equidistants family has the same Central Symmetry Set and Area Evolute as ${\mathcal P}$. 
We shall verify that, as in the smooth case, cusps of the equidistants coincide with the Central Symmetry Set. 
In this discrete setting, the locus of self-intersections of the equidistants is also an interesting set to consider. We call it the {\it equidistant symmetry set} 
and prove that the branches of the Equidistant Symmetry Set have cusps at the Central Symmetry Set and endpoints at cusps of the Central Symmetry Set or cusps of the Area Evolute.

%For a point $x$ inside the curve and $c$ a chord with $x$ as midpoint, let $A(x,c)$ denote the smallest area between the two regions 
%inside the curve bounded by $c$. Although there may exist more that one $c$ for a given $x$, we define the area parallels as level sets of $A$. 
For a smooth convex closed planar curve, an area parallel is the envelope of the chords that cut off a fixed area of the interior of the curve (\cite{Craizer08},\cite{Niethammer04}). 
The following property is well-known:
\begin{itemize}
\item The locus of cusps of the area parallels is exactly the Area Evolute (\cite{Holtom01}, sec.2.7). 
\end{itemize}

For a convex polygon with parallel opposite sides, the definition of area parallels can be carried out without changes and we obtain piecewise smooth curves made of arcs of hyperbolas. Nevertheless, it seems easier to consider the polygons whose sides connect the endpoints of these arcs of hyperbolas, that we call 
{\it rectified area parallels}. We show that, as in the smooth case, cusps of the rectified area parallels belong to the area evolute.

Given a convex polygon ${\mathcal P}$ with parallel opposite sides, we show the existence of a convex polygon ${\mathcal Q}$ with parallel opposite sides whose
Central Symmetry Set is exactly the Area Evolute of ${\mathcal P}$ and whose sides are parallel to the great diagonals of ${\mathcal P}$. It is uniquely defined 
up to equidistants and we call it the {\it parallel-diagonal transform} of ${\mathcal P}$. 
%The Area Evolute  of any polygon of the family ${\mathcal Q}_{\mu}$ does not depend on $\mu$, and we shall denote it by ${\mathcal N}$. 
%A smooth counterpart of the parallel-diagonal transform has been studied in \cite{Craizer13}. 

In the smooth case, the locus of self-intersections of the area parallels is called {\it affine area symmetry set} (\cite{Holtom01}, sec.2.7). 
For a convex polygon ${\mathcal P}$ with parallel opposite sides, the locus of self-intersections of rectified area parallels will be called {\it rectified area symmetry set}.
The Rectified Area Symmetry Set is difficult to understand in general, but we can describe it when the original polygon satisfies an {\it almost symmetry} hypothesis. A $1$-diagonal of ${\mathcal P}$ is a diagonal connecting vertices such that one is adjacent to the opposite of the other. The almost symmetry  hypothesis states that for some $\mu_0$, the equidistant ${\mathcal Q}_{\mu_0}$ of ${\mathcal Q}$ contains the Area Evolute of ${\mathcal P}$ in its interior and the midpoints of the $1$-diagonals are outside it. 
Under this hypothesis, the equidistants ${\mathcal Q}_{\mu}$ for $\mu\geq\mu_0$ coincide with the area parallels of ${\mathcal P}$ and thus the Equidistant Symmetry Set of ${\mathcal Q}$ coincides with the Rectified Area Symmetry Set
of ${\mathcal P}$. We conclude that the branches of the Rectified Area Symmetry Set of ${\mathcal P}$ have cusps
at the Area Evolute of ${\mathcal P}$ and endpoints at the cusps of the Area Evolute of ${\mathcal P}$ or else at cusps of the area evolute ${\mathcal N}$ of 
${\mathcal Q}$. Observe also that, under the almost symmetry hypothesis, ${\mathcal N}$ is exactly
the rectified area parallel of level half of the total area of ${\mathcal P}$.

The paper is organized as follows:  In section 2 we define the Area Evolute and Central Symmetry Set for convex polygons with parallel opposite sides 
and prove their basic properties. 
In section 3 we study the equidistants and the corresponding symmetry set. 
In section 4 we define and prove the existence of the parallel-diagonal transform. In section 5 we discuss the properties of the rectified area parallels.  

 We have used the free software GeoGebra (\cite{GeoGebra}) for all figures and many experiments during the preparation of the paper. Applets of some of these experiments are available at \cite{Applets}, and we 
 refer to them in the text simply by Applets.
 We would like to thank the GeoGebra team for this excellent mathematical tool.

\section{Basic notions}

A closed planar polygon ${\mathcal P}$ is called convex if it bounds a convex region and has not parallel adjacent sides. 
Let $\{P_1,...,P_n,P_{n+1},...,P_{2n}\}$ denote the vertices of a convex planar $2n$-gon ${\mathcal P}$. 
The polygon has parallel opposite sides  if
$$
\left( P_{i+n+1}-P_{i+n} \right) \parallel  \left( P_{i+1}-P_i \right),
$$
for any $1\leq i\leq n$. Throughout the paper, indices will be taken modulo $2n$. 
We shall assume that the convex parallel opposite sides (CPOS) polygons ${\mathcal P}$ are positively oriented, i.e., 
$$
\left[     P_{i+1}-P_i,     P_{j+1}-P_j    \right]>0,
$$
for any $1\leq i <j \leq n$.

\subsection{ Area Evolute and Central Symmetry Set}

Denote by $d_i$ the great diagonal $P_iP_{i+n}$ and by $D(i+\tfrac{1}{2})$ the point of intersection of  $d_{i}$ and $d_{i+1}$. 
The {\it central symmetry set}  of the polygon ${\mathcal P}$ is the polygon whose vertices are $D(i+\frac{1}{2})$, $1\leq i\leq n$.

Denote by $M_{i}=\frac{1}{2}\left( P_i+P_{i+n} \right)$ the midpoints of the segments $P_iP_{i+n}$ and by $m(i+\tfrac{1}{2})$ 
the lines through $M_i$ parallel to $P_iP_{i+1}$, which we shall call mid-parallel lines. 
The polygon whose vertices are $M_i$, $1\leq i\leq n$, is called the {\it area evolute} of the polygon (see figure \ref{Basics} and Applets).

\begin{figure}[htb]
 \centering
 \includegraphics[width=1.00\linewidth]{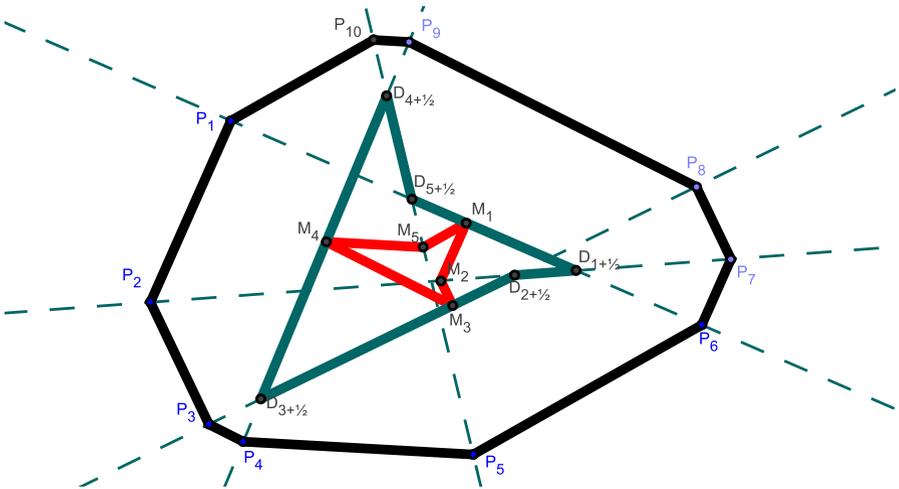}
 \caption{Great diagonals dashed. The area evolute has vertices $M_i$, while the central symmetry set has vertices $D_{i+1/2}$.}
\label{Basics}
\end{figure}

A CPOS polygon is called {\it symmetric} with respect to a point $O$ if $P_{i+n}-O=O-P_i$, for any $1\leq i\leq n$.

\begin{proposition}
The central symmetry set of a CPOS polygon reduces to a point if and only if the polygon is symmetric. Similarly, the area evolute of a CPOS polygon 
reduces to a point if and only if the polygon is symmetric.
\end{proposition}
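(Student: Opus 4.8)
The plan is to establish the two equivalences side by side: the ``if'' direction and the ``only if'' direction for the area evolute are essentially immediate, whereas the ``only if'' direction for the central symmetry set is the substantive part and is where the parallel-opposite-sides hypothesis is really used.

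Suppose first that ${\mathcal P}$ is symmetric with respect to $O$. Then $M_i=\tfrac{1}{2}(P_i+P_{i+n})=O$ for every $i$, so the area evolute is the single point $O$; moreover $O$ is the midpoint of each segment $P_iP_{i+n}$, hence every great diagonal $d_i$ passes through $O$. Consecutive great diagonals are distinct lines (their four endpoints $P_i,P_{i+1},P_{i+n},P_{i+n+1}$ are in convex position, so not collinear), so each vertex $D(i+\tfrac{1}{2})=d_i\cap d_{i+1}$ equals $O$ and the central symmetry set is also the single point $O$. Conversely, if the area evolute reduces to a point $O$, then $M_i=O$, i.e. $P_{i+n}-O=O-P_i$, for all $i$ --- which is precisely the definition of symmetry.

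It remains to prove that if the central symmetry set reduces to a point $O$, then ${\mathcal P}$ is symmetric. First I note that $O$ lies on every great diagonal, since each $d_i$ occurs as one of the two lines whose intersection is some $D(j+\tfrac{1}{2})=O$; and as the crossing point of two chords of a convex polygon (say $d_1$ and $d_2$, whose endpoints interleave along the boundary), $O$ lies in the interior of ${\mathcal P}$, hence in the relative interior of each chord $P_iP_{i+n}$. Placing the origin at $O$, I then write $P_{i+n}=-\lambda_iP_i$ with $\lambda_i>0$ and substitute into the parallelism relations. For $1\le i\le n-1$ one gets $P_{i+n+1}-P_{i+n}=\lambda_iP_i-\lambda_{i+1}P_{i+1}$, so expanding the relation $[\,P_{i+n+1}-P_{i+n},\,P_{i+1}-P_i\,]=0$ gives $(\lambda_i-\lambda_{i+1})[P_i,P_{i+1}]=0$; since $P_iP_{i+1}$ is an edge its line misses the interior, so $O,P_i,P_{i+1}$ are not collinear, $[P_i,P_{i+1}]\neq 0$, and $\lambda_i=\lambda_{i+1}$. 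Hence $\lambda_1=\cdots=\lambda_n=:\lambda$.

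The last step, which I expect to be the main obstacle, is to recover $\lambda=1$ from the ``wrap-around'' relation $i=n$: here the side opposite $P_nP_{n+1}$ is $P_{2n}P_{2n+1}=P_{2n}P_1$, involving the genuine vertex $P_1$ rather than a point $-\lambda_jP_j$. Using $P_{2n+1}-P_{2n}=P_1+\lambda_nP_n$ and $P_{n+1}-P_n=-\lambda_1P_1-P_n$ and expanding the corresponding bracket yields $(\lambda_1\lambda_n-1)[P_1,P_n]=0$; since $P_n$ does not lie on the great diagonal $d_1$ we have $[P_1,P_n]\neq 0$, so $\lambda_1\lambda_n=\lambda^2=1$ and therefore $\lambda=1$. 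Then $P_{i+n}=-P_i$ relative to $O$, i.e. $M_i=O$ for all $i$, so ${\mathcal P}$ is symmetric. The point to emphasize is that the interior parallelism conditions by themselves only force the diagonal ratios $\lambda_i$ to be mutually equal; it is the closing up of the $2n$-cycle of vertices that pins the common ratio down to $1$, and this is the step where the index and sign bookkeeping should be checked carefully.
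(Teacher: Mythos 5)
Your proof is correct and follows essentially the same route as the paper: concurrency of the great diagonals at $O$ together with the parallelism of opposite sides forces a common ratio for all the diagonals, and a cyclic condition then forces that ratio to be $1$. The only (harmless) difference is in the last step --- the paper writes the sides as $w_1,\dots,w_n,-\alpha w_1,\dots,-\alpha w_n$ and gets $\alpha=1$ from the closing condition $(1-\alpha)\sum_i w_i=0$ with $\sum_i w_i=P_{n+1}-P_1\neq 0$, whereas you get $\lambda^2=1$ from the wrap-around parallelism relation at $i=n$; both are valid.
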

\begin{proof}
It is clear that if the CPOS polygon is symmetric with respect to a point $O$, then the Area Evolute and the Central Symmetry Set reduce to this point. It is also clear that if
the Area Evolute of a polygon reduces to a point $O$, then the polygon is symmetric with respect to $O$.

Assume now that the Central Symmetry Set ${\mathcal P}$ reduces to a point $O$. Then necessarily 
$$
P_{i+n+1}-P_{i+n}=-\alpha\left(P_{i+1}-P_i\right),
$$
for some $\alpha>0$. But such a polygon can close only if $\alpha=1$, and thus ${\mathcal P}$ is symmetric.
\end{proof}

\subsection{The mid-point property}\label{sec:mid-point}

We shall denote by $e(i+\tfrac{1}{2})$ the edge whose endpoints are $P_i$ and $P_{i+1}$. A pair of edges $e(i+\tfrac{1}{2})$ and $e(j+\tfrac{1}{2})$ determines a parallelogram 
$M(i+\frac{1}{2},j+\frac{1}{2})$ consisting of the mid-points of pairs $(y_1,y_2)$, $y_1\in e(i+\tfrac{1}{2})$ and $y_2\in e(j+\tfrac{1}{2})$.

Denote by $N(x)$ the number of chords with midpoint $x$, identifying those chords whose endpoints belong to the same sides of the polygon. 

\begin{proposition}\label{prop:midpoints}
$N(x)$ is locally constant except at the area evolute. 
When one traverses a segment of the area evolute, $N(x)$ increases or decreases by $2$, depending on the orientation. 
\end{proposition}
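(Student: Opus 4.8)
The plan is to set up a combinatorial count of chords with a given midpoint $x$ by decomposing the set of all such chords according to which pair of opposite edge-directions the chord uses, and then to track how this count changes as $x$ crosses a mid-parallel segment. First I would fix the point $x$ in the interior of $\mathcal{P}$ and observe that a chord with midpoint $x$ joining a point $y_1 \in e(i+\tfrac12)$ to a point $y_2$ of the polygon forces $y_2 = 2x - y_1$; since $x$ is fixed, the map $y_1 \mapsto 2x-y_1$ is the point reflection through $x$, and it carries the edge $e(i+\tfrac12)$ to a segment parallel to $e(i+\tfrac12)$. Because the polygon is CPOS, the only edge of $\mathcal{P}$ parallel to $e(i+\tfrac12)$ is $e(i+n+\tfrac12)$, so a chord through $x$ with one endpoint on $e(i+\tfrac12)$ must have its other endpoint on $e(i+n+\tfrac12)$. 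Hence $N(x)$ is exactly the number of indices $i$, $1\le i\le n$, for which $x$ lies in the parallelogram $M(i+\tfrac12, i+n+\tfrac12)$ — the set of midpoints of pairs $(y_1,y_2)$ with $y_1\in e(i+\tfrac12)$, $y_2\in e(i+n+\tfrac12)$. (The identification of chords with endpoints on the same sides is precisely what makes this an honest count of parallelograms containing $x$.)

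Next I would analyze the boundary of each parallelogram $\Pi_i := M(i+\tfrac12, i+n+\tfrac12)$. Its four sides are: two segments parallel to $e(i+\tfrac12)$ — one the midpoint locus of $(P_i, \cdot)$ with $\cdot$ ranging over $e(i+n+\tfrac12)$, i.e. a translate of $\tfrac12 e(i+n+\tfrac12)$ based at $M_i' := \tfrac12(P_i + P_{i+n})$... here I must be careful with indices: the parallelogram $\Pi_i$ has vertices $\tfrac12(P_i+P_{i+n})=M_i$, $\tfrac12(P_i+P_{i+n+1})$, $\tfrac12(P_{i+1}+P_{i+n})$, $\tfrac12(P_{i+1}+P_{i+n+1})=M_{i+1}$. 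So two \emph{opposite} corners of $\Pi_i$ are the consecutive area-evolute vertices $M_i$ and $M_{i+1}$, and the diagonal $M_iM_{i+1}$ of this parallelogram is exactly the segment of the area evolute joining them. The key geometric fact is then: the sides of $\Pi_i$ incident to $M_i$ lie along the mid-parallel line $m(i+\tfrac12)$-direction, and as $x$ crosses the interior of the area-evolute segment $M_iM_{i+1}$ transversally, $x$ enters or leaves exactly one of the two parallelograms $\Pi_{i}$... no: $x$ moves from one side of the diagonal of $\Pi_i$ to the other, staying inside $\Pi_i$, so $\Pi_i$ does not change its count there. The change must come from the neighbouring parallelograms $\Pi_{i-1}$ and $\Pi_{i+1}$, which share the corner $M_i$ (resp. $M_{i+1}$) but not the full edge.

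Let me instead argue as follows. The boundary of $\bigcup_i \Pi_i$ and the overlaps are controlled by the edges of the $\Pi_i$, and each edge of $\Pi_i$ is a half-length translate of an edge of $\mathcal{P}$; the union of all these edge-lines, restricted to where counts actually jump, turns out to be exactly the area evolute together with translates that cancel. Concretely, I would show that the two parallelograms $\Pi_{i-1}$ and $\Pi_i$ share the common edge $e$ running from $M_i$ in the direction of $e(i+\tfrac12)$ shrunk by $\tfrac12$ — wait, they share only the vertex $M_i$. The cleanest route: compute, for each $i$, the indicator $\chi_i(x)$ of $\Pi_i$ as a product of two half-plane indicators (each parallelogram is an intersection of four half-planes, but two pairs are parallel), and note that the gradient jumps of $\sum_i \chi_i$ occur along the boundary segments; then show combinatorially that along every boundary segment \emph{not} on the area evolute, the jump from $\Pi_i$ is cancelled by an equal and opposite jump from an adjacent $\Pi_j$ (because adjacent parallelograms $\Pi_{i-1},\Pi_i$ abut along a line through $M_i$ in a matched way, by the parallelism of opposite sides), whereas along each area-evolute segment $M_iM_{i+1}$ the jump is uncancelled and equals $\pm 2$, the factor $2$ coming from the fact that crossing that segment changes membership in \emph{two} of the parallelograms at once (one picks up $\Pi$ on one side from the "past" family and loses the analogous one) — this is the smooth statement that $N$ changes by $2$, not $1$, because a chord and its reversal are being counted together via the two edges it meets.

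The main obstacle I expect is exactly this bookkeeping of which parallelogram memberships change at an area-evolute segment versus at the spurious segments, and getting the sign (increase vs. decrease according to orientation) right. I would handle it by a local model: near an interior point of the segment $M_iM_{i+1}$, only the parallelograms $\Pi_{i-1}, \Pi_i, \Pi_{i+1}$ (and their "$+n$-shifted" partners, which under the identification are the same) can have a boundary passing through, and a direct $2\times 2$ determinant computation using positive orientation $[P_{i+1}-P_i, P_{j+1}-P_j]>0$ pins down that the net change of $\sum \chi_i$ is $+2$ when crossing in the direction consistent with the orientation and $-2$ the other way. Everywhere else the boundary contributions cancel in pairs, giving local constancy of $N(x)$ off the area evolute.
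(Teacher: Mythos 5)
Your opening reduction is false, and everything after it inherits the error. You claim that a chord with midpoint $x$ and one endpoint on $e(i+\tfrac12)$ must have its other endpoint on the opposite edge $e(i+n+\tfrac12)$, because the reflected segment $2x-e(i+\tfrac12)$ is parallel to $e(i+\tfrac12)$. But the second endpoint $y_2$ only has to lie on the \emph{intersection} of that reflected segment with the boundary of ${\mathcal P}$, and a segment parallel to $e(i+\tfrac12)$ generically meets the boundary in two points lying on edges that are \emph{not} parallel to $e(i+\tfrac12)$. The correct statement is that $x$ is the midpoint of a chord joining $e(i+\tfrac12)$ and $e(j+\tfrac12)$ exactly when $x\in M(i+\tfrac12,j+\tfrac12)$, for an \emph{arbitrary} pair $(i,j)$; for $j\neq i+n$ these are genuine parallelograms of positive area, and these are what carry the count. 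The sets you restrict to, $\Pi_i=M(i+\tfrac12,i+n+\tfrac12)$, are built from two \emph{parallel} edges and are therefore degenerate: all four ``vertices'' you list ($M_i$, $\tfrac12(P_i+P_{i+n+1})$, $\tfrac12(P_{i+1}+P_{i+n})$, $M_{i+1}$) are collinear on the mid-parallel $m(i+\tfrac12)$. So $M_iM_{i+1}$ is not a diagonal of a two-dimensional parallelogram, your indicator functions $\chi_i$ vanish almost everywhere, and your count of the $\Pi_i$ containing $x$ would be $0$ for almost every $x$ — contradicting $N(x)\geq 1$. Your own hesitations in the text (``...no:'', ``wait, they share only the vertex $M_i$'') are symptoms of this collapse.

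The paper's proof works with all pairs $(i,j)$ and tracks how a chord with midpoint near $x$ continues as one endpoint crosses a vertex of ${\mathcal P}$: in the interior of a nondegenerate $M(i+\tfrac12,j+\tfrac12)$, and at boundary crossings where $j\neq i+n, i+n-1$, the chord continues uniquely into an adjacent parallelogram, so $N$ is locally constant there; the jump happens only at a fold, where $x_2$ reaches the vertex $P_{i+n}$ with $x_1$ on the longer of the two edges $e(i-\tfrac12)$, $e(i+n-\tfrac12)$ — there two distinct chords (one in $M(i-\tfrac12,i+n+\tfrac12)$, one in $M(i-\tfrac12,i+n-\tfrac32)$) exist on one side of the area-evolute segment and none on the other, which is where the $\pm2$ comes from (two branches annihilating at a fold, not ``a chord and its reversal''). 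To repair your argument you would have to redo it with the full family of parallelograms and carry out exactly this kind of local matching, which is a different proof from the one you set up.
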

\begin{proof}

Write $x=\frac{1}{2}(x_1+x_2)$, where $x_1\in e(i+\tfrac{1}{2})$ and $x_2\in e(j+\tfrac{1}{2})$. We shall assume that $x_1$ and $x_2$ are not both endpoints 
of $e(i+\tfrac{1}{2})$ and $e(j+\tfrac{1}{2})$, or equivalently, $x$ is not a vertex of $M(i+\frac{1}{2},j+\frac{1}{2})$. 

We must consider the following cases:
\begin{enumerate}

\item If $x_1$ and $x_2$ are both in the interior of $e(i+\tfrac{1}{2})$ and $e(j+\tfrac{1}{2})$,\linebreak $j\neq i+n$, or equivalently, $x$ is in interior of $M(i+\frac{1}{2},j+\frac{1}{2})$,  then for $y$ in a neighborhood of $x$,
we can find unique $y_1\in e(i+\tfrac{1}{2})$ and $y_2\in e(j+\tfrac{1}{2})$ such that $y=\frac{1}{2}(y_1+y_2)$.  

\item If $x_1=P_i$ and $j\neq i+n$, $j\neq i+n-1$, then again there exists a neighborhood $U$ of $x$ such that for $y\in U$  we can find unique 
$y_1\in e(i-\tfrac{1}{2})\cup e(i+\tfrac{1}{2})$ and $y_2\in e(j+\tfrac{1}{2})$ such that $y=\frac{1}{2}(y_1+y_2)$. 
In fact, for $y\in U\cap M(i-\frac{1}{2},j+\frac{1}{2})$ we choose $y_1\in e(i-\tfrac{1}{2})$ while for  for $y\in U\cap M(i+\frac{1}{2},j+\frac{1}{2})$ we choose $y_1\in e(i+\tfrac{1}{2})$.

\item Suppose that $x_1=P_i$ and $x_2\in e(i+n-\tfrac{1}{2})$ with $|e(i-\tfrac{1}{2})|>|e(i+n-\tfrac{1}{2})|$ (see figure \ref{MidPoint1}). In this case $x$ is also the midpoint of $x_2'=P_{i+n-1}$ and some
$x_1'\in e(i-\tfrac{1}{2})$. Thus, for $y$ in a neighborhood of $x$, we distinguish between $y\in M(i+\frac{1}{2},i+n-\frac{1}{2})$ and $y\in M(i-\frac{1}{2},i+n-\frac{3}{2})$. In the first case, 
$y$ is the midpoint of $y_1\in e(i+\frac{1}{2})$ and $y_2\in e(i+n-\frac{1}{2})$. In the second case, $y$ is the midpoint of $y_1\in e(i-\frac{1}{2})$ and $y_2\in e(i+n-\frac{3}{2})$. 

\begin{figure}[htb]
 \centering
 \includegraphics[width=1.00\linewidth]{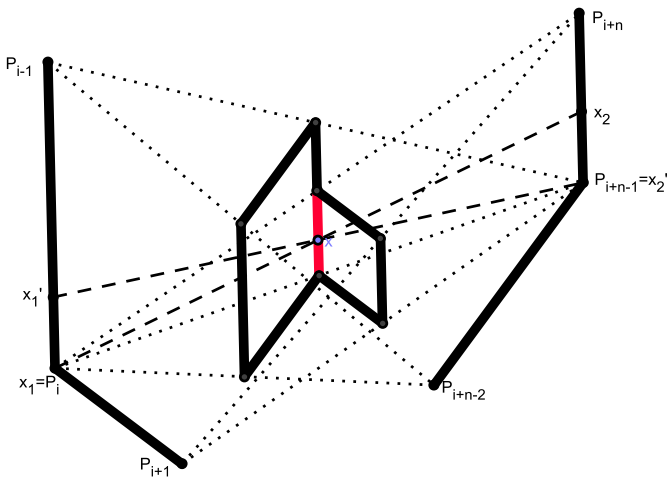}
 \caption{Case 3. $x$ is in the segment intersection of both parallelograms.}
\label{MidPoint1}
\end{figure}

\item It remains to analyze the case in which $x$ is on the area evolute. Suppose that $x_2=P_{i+n}$ and $x_1\in e(i-\tfrac{1}{2})$ with $|e(i-\tfrac{1}{2})|>|e(i+n-\tfrac{1}{2})|$ (see figure \ref{MidPoint2}). In this case $x$ is also the midpoint of $x_2'=P_{i+n-1}$ and some $x_1'\in e(i-\tfrac{1}{2})$ and the parallelograms with $x$ in the boundary are $M(i-\frac{1}{2},i+n-\frac{3}{2})$ and 
$M(i-\frac{1}{2},i+n+\frac{1}{2})$. Consider now $y$ in a neighborhood of $x$. If $y$ is closer to $e(i-\tfrac{1}{2})$ then $y$ is the midpoint of some pair $(y_1,y_2)$, $y_1\in e(i-\tfrac{1}{2})$ and 
$y_2\in e(i+n+\tfrac{1}{2})$ and is also 
the midpoint of another pair $(y_1',y_2')$, $y_1'\in e(i-\tfrac{1}{2})$ and $y_2'\in e(i+n-\tfrac{3}{2})$. On the other hand, if $y$ is closer to $e(i+n-\tfrac{1}{2})$, there are no pairs $(y_1,y_2)$ in the neighborhood
of $(x_1,x_2)$ or $(x_1',x_2')$ with $y$ as midpoint. We conclude that when we traverse a segment of the area evolute from the smaller to the bigger side, $N(x)$ is increased by $2$. 

\begin{figure}[htb]
 \centering
 \includegraphics[width=1.00\linewidth]{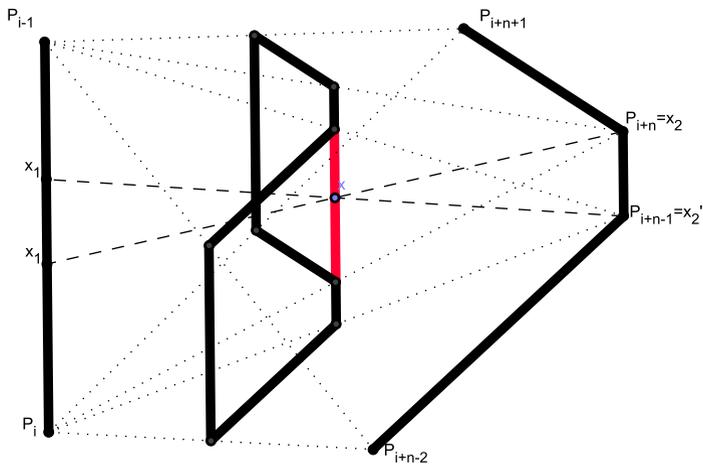}
 \caption{Case 4. A fold at the area evolute.}
\label{MidPoint2}
\end{figure}

\end{enumerate}
\end{proof}

\subsection{Cusps of the area evolute and central symmetry set}

In this section, we make the simplifying assumption that $D(i-\tfrac{1}{2})\neq D(i+\tfrac{1}{2})$, for every $1\leq i\leq n$. Define $\lambda(i+\tfrac{1}{2})$, $1\leq i\leq 2n$,  by 
$$
D(i+\tfrac{1}{2})=P_i+\lambda(i+\tfrac{1}{2})\left( P_{i+n}-P_i\right),
$$
where $D(i+n+\tfrac{1}{2})=D(i+\tfrac{1}{2})$. Observe that
$$
e(i+n+\tfrac{1}{2})=\frac{\lambda(i+\tfrac{1}{2})-1}{\lambda(i+\tfrac{1}{2})} \ e(i+\tfrac{1}{2}), 
$$
for any $1\leq i\leq 2n$ and that $\lambda(i+n+\tfrac{1}{2})=1-\lambda(i+\tfrac{1}{2})$.

We say that a vertex $D(i_0+\tfrac{1}{2})$ of the Central Symmetry Set is a cusp if $\lambda(i_0+\tfrac{1}{2})$ is a local extremum of the cyclic sequence $\lambda(i+\tfrac{1}{2})$. For the Area Evolute, we say that a vertex $M_{i_0}$ is a cusp  if $\lambda(i_0-\tfrac{1}{2})\cdot \lambda(i_0+\tfrac{1}{2})<0$. The geometrical meaning of this definitions will be clear in next section.

\begin{proposition}\label{prop:cuspsCSS}
The number  of cusps of the Area Evolute is odd and $\geq 3$. The number of cusps of the Central Symmetry Set is odd and not smaller than the number of cusps of the Area Evolute. 
\end{proposition}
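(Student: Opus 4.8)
The plan is to analyze the two cyclic sequences attached to $\mathcal P$: the sequence $\lambda(i+\tfrac12)$, $1\le i\le 2n$, indexed modulo $2n$, and the sequence of signs of consecutive products $\lambda(i-\tfrac12)\cdot\lambda(i+\tfrac12)$. The key structural fact I would extract first from the relation $\lambda(i+n+\tfrac12)=1-\lambda(i+\tfrac12)$ is that the sequence $\lambda$ is \emph{anti-periodic} of half-period $n$ about the value $\tfrac12$; in particular, writing $\mu(i+\tfrac12)=\lambda(i+\tfrac12)-\tfrac12$, we have $\mu(i+n+\tfrac12)=-\mu(i+\tfrac12)$. From this, a local extremum at $i_0$ is matched by a local extremum at $i_0+n$ of the opposite type (max $\leftrightarrow$ min), so cusps of the Central Symmetry Set come in pairs except that the count around a full period of length $2n$ of a real cyclic sequence with no equal adjacent terms (using the assumption $D(i-\tfrac12)\ne D(i+\tfrac12)$, which forces $\lambda(i-\tfrac12)\ne\lambda(i+\tfrac12)$) is automatically even; but I must be careful — what I actually want is that the number of cusps, suitably understood on the quotient $n$-gon, is odd and $\ge 3$. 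So the first real step is to pin down exactly on which index set ``cusp'' is being counted (length $n$, via the antipodal identification) and to show that on that quotient a local extremum at $i_0$ forces a local extremum of opposite type somewhere in the complementary half, which is what produces oddness rather than evenness.

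Concretely, for the Area Evolute: a vertex $M_{i_0}$ is a cusp iff $\mu$ (equivalently $\lambda$) changes sign between $i_0-\tfrac12$ and $i_0+\tfrac12$, i.e. iff $\lambda$ crosses $\tfrac12$... no: the definition is $\lambda(i_0-\tfrac12)\lambda(i_0+\tfrac12)<0$, a crossing of $0$, not of $\tfrac12$. Here I would use anti-periodicity differently: since $\lambda(i+n+\tfrac12)=1-\lambda(i+\tfrac12)$, the value $\lambda$ at an index and at its antipode lie on opposite sides of $\tfrac12$, hence their signs can be compared, and going once around the full $2n$-cycle the number of sign changes of $\lambda$ (crossings of $0$) is even; I then show that exactly half of them occur in each half-period because a crossing of $0$ in one half is paired, via $\lambda\mapsto 1-\lambda$, with a crossing of $1$ (not $0$) in the other half — so the $0$-crossings are \emph{not} symmetric under the half-shift. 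Instead, the cyclic sequence $\lambda(\tfrac12),\dots,\lambda(2n+\tfrac12)$ starts and (cyclically) ends at values related by $x\mapsto 1-x$ across each half; running through one period and tracking the integer $N(x)$ of Proposition \ref{prop:midpoints}, which jumps by $\pm 2$ exactly at the Area Evolute, gives a parity/total-variation argument: $N$ returns to itself after a full loop, so the signed count of crossings is $0$, and the unsigned count is therefore even — but the orientation flip from the antipodal map turns this into an \emph{odd} count of geometric cusps on the $n$-gon. This is the crux, and it mirrors exactly the smooth statement that the Area Evolute has an odd number of cusps.

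For the final inequality — cusps of the Central Symmetry Set $\ge$ cusps of the Area Evolute — the step is to compare the two sequences directly: every cusp of the Area Evolute is a sign change of $\lambda$ from negative to positive or vice versa, and between consecutive sign changes $\lambda$ has constant sign, hence (being nonconstant on adjacent terms) must attain at least one local extremum of the cyclic sequence in each such block of constant sign where the sign is, say, positive with a neighbour-block negative — more precisely, between a down-crossing and the next up-crossing of $0$ the sequence $\lambda$ is negative and must have a local minimum, and each such local minimum is by definition a cusp of the Central Symmetry Set. Pairing up: each cusp of the Area Evolute can be charged to a distinct local extremum of $\lambda$ lying ``just past'' it, giving an injection from Area-Evolute cusps to Central-Symmetry-Set cusps, hence the inequality; oddness of the Central Symmetry Set count then follows from oddness of the $\lambda$-extrema count on the $n$-gon, again via the anti-periodicity, and ``$\ge 3$'' follows since an odd number that is at least $1$ and cannot be $1$ (a single local extremum is impossible for a nonconstant cyclic sequence, which must have at least one max and one min, hence at least two, hence — being odd — at least three) is $\ge 3$.

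\medskip

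\noindent\textbf{Main obstacle.} The delicate point is bookkeeping the index set and the effect of the antipodal identification $i\leftrightarrow i+n$ together with the involution $\lambda\mapsto 1-\lambda$: it is this orientation-reversing symmetry that converts the naively even crossing-counts on the $2n$-cycle into odd cusp-counts on the $n$-gon, and getting the parity exactly right (rather than off by the obvious factor of two) is where the real content lies. I would isolate this as a short combinatorial lemma about cyclic sequences $a_1,\dots,a_{2n}$ with $a_{i+n}=1-a_i$ and $a_{i}\ne a_{i+1}$: the number of sign changes of $a_i$ and the number of local extrema of $a_i$, counted on $\mathbb Z/2n$, are each twice an odd number, and deduce both statements from it.
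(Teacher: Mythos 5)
There is a genuine gap, and it sits precisely in the two places where you yourself flag uncertainty. First, the parity argument is never actually carried out: you oscillate between reading the Area Evolute cusp condition as a $0$-crossing of $\lambda$ (the literal text) and as a $\tfrac{1}{2}$-crossing, observe correctly that the antipodal symmetry $\lambda\mapsto 1-\lambda$ does not preserve $0$-crossings, and then substitute an appeal to $N(x)$ and an unexplained ``orientation flip turns even into odd.'' The resolution is that the relevant quantity throughout is $\mu(i+\tfrac{1}{2})=\lambda(i+\tfrac{1}{2})-\tfrac{1}{2}$; this is what matches the cusp characterization of equidistants in Section~\ref{sec:equidistants} (the Area Evolute being the equidistant at level $\tfrac{1}{2}$) and what the paper's own argument uses when it speaks of values bigger or smaller than $1/2$. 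Since $\mu(i+n+\tfrac{1}{2})=-\mu(i+\tfrac{1}{2})$, the $n$ cusp tests for $M_1,\dots,M_n$ are the sign changes of the string $\mu(\tfrac{1}{2}),\mu(1+\tfrac{1}{2}),\dots,\mu(n+\tfrac{1}{2})$, whose endpoints have opposite signs, so the count is odd; applying the same antiperiodicity to the difference sequence gives oddness of the local-extremum count. None of this appears in your writeup in checkable form.

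Second, and more seriously, your argument for ``$\geq 3$'' is wrong. You try to exclude a single cusp by a purely combinatorial remark (``a nonconstant cyclic sequence must have at least one max and one min''), but that concerns extrema, not sign changes, and in any case an antiperiodic sequence with exactly two sign changes on the $2n$-cycle (hence exactly one cusp on the quotient) certainly exists: take $\mu>0$ on $n$ consecutive indices and $\mu<0$ on the remaining $n$. Ruling this out requires geometric input, which is exactly what the paper supplies: if there were only one cusp, the sides would be $w_1,\dots,w_n,-\alpha_1w_1,\dots,-\alpha_nw_n$ with all $\alpha_i$ on the same side of $1$, and the closing condition $\sum w_i=\sum\alpha_iw_i$ combined with convexity ($[w_1,w_i]>0$ for $2\le i\le n$) yields a contradiction upon taking the bracket with $w_1$. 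This use of the polygon's closing condition is the one genuinely non-combinatorial idea in the proof, and it is absent from your proposal. Your argument for the inequality between the two cusp counts (each maximal constant-sign block between consecutive crossings contains a local extremum) is essentially the paper's and is fine.
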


\begin{proof}
Since $\lambda(i+n+\tfrac{1}{2})=1-\lambda(i+\tfrac{1}{2})$, there are an odd number of local extrema and zero crossings in the sequence $\lambda(i+\tfrac{1}{2})$, $1\leq i\leq n$. Thus the numbers of cusps of the Central Symmetry Set and the Area Evolute are odd. Moreover, between two zero crossings there exists at least one local extremum.
Thus the number of cusps of the Central Symmetry Set is bigger than or equal to the number of cusps of the Area Evolute. 

Assume now by contradiction that the sequence $\lambda(i+\tfrac{1}{2})$, $1\leq i\leq 2n$, has only one zero crossing. Then it would necessarily have $n$ consecutive values bigger than $1/2$ followed by $n$ consecutive values smaller than $1/2$. Thus the sides of the polygon would be of the form 
$w_1,...,w_n$, $-\alpha_1w_1,...,-\alpha_nw_n$,
with $\alpha_i>1$, $1\leq i\leq n$, and 
$$
\sum_{i=1}^nw_i=\sum_{i=1}^n\alpha_iw_i. 
$$
Taking the determinant product with $w_1$ we get
$$
\sum_{i=2}^n[w_1,w_i]=\sum_{i=2}^n\alpha_i[w_1,w_i],
$$
which is a contradiction since $[w_1,w_i]>0$, for $2\leq i\leq n$. 
\end{proof}

\subsection{ Symmetric and non-symmetric CPOS equal-area polygons}

A polygon is called equal-area if 
$$
\left[     P_{i+1}-P_i,     P_{i}-P_{i-1}    \right]=\left[     P_{j+1}-P_j,     P_{j}-P_{j-1}    \right]
$$
for any $i,j$ (\cite{Craizer12}). It is easy to verify that any symmetric CPOS is equal-area,  
but, as we shall see below, the reciprocal is true only for even $n$. 

For $n$ odd, consider vectors $w_1, w_2,...,w_n$ satisfying $[w_i,w_{i+1}]<0$ and $\sum_{i=1}^nw_i=0$. For $\alpha>0$, 
consider a $2n$-gon ${\mathcal P}$ whose sides are $e_i=w_i,\ e_{n+i}=-\alpha w_i$, for $1\leq i\leq n$ odd, and $e_i=-\alpha w_i,\ e_{n+i}=w_i$, for $1\leq i\leq n$ even. Then the polygon ${\mathcal P}$
is  CPOS and equal-area. 

\begin{proposition}
Any CPOS equal-area polygon is symmetric or obtained by the above construction. 
\end{proposition}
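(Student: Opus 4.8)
The plan is to start from the edge-vector characterization already used in the proof of Proposition~\ref{prop:cuspsCSS}. A CPOS $2n$-gon is determined (up to translation) by its edges $e_1,\dots,e_{2n}$, subject to the parallelism condition $e_{i+n}=-\alpha_i e_i$ with $\alpha_i>0$ for $1\le i\le n$, the convexity/orientation condition $[e_i,e_j]>0$ for $1\le i<j\le n$, and the closure condition $\sum_{i=1}^{2n}e_i=0$, which in terms of $w_i:=e_i$ reads $\sum_{i=1}^n(1-\alpha_i)w_i=0$. So the whole question reduces to: for which choices of positive scalars $\alpha_1,\dots,\alpha_n$ (with the $w_i$ a fixed convex frame) is the polygon equal-area? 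The equal-area condition $[e_{i+1}-e_i? \dots]$—more precisely $[e_{i+1},e_i]$ constant in $i$, reading the hypothesis as $[P_{i+1}-P_i,P_i-P_{i-1}]=[e_{i+\frac12},e_{i-\frac12}]$ independent of $i$—becomes a system of equations relating consecutive $\alpha$'s and the angles $[w_i,w_j]$.

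Next I would write out this system explicitly around the polygon. Going once around, the consecutive-edge brackets are $[w_{i+1},w_i]$ for the ``first half'' transitions, then one ``corner'' bracket $[-\alpha_1 w_1, w_n]=\alpha_1[w_1,w_n]$ wait—more carefully, the transitions are among the $2n$ edges $w_1,\dots,w_n,-\alpha_1w_1,\dots,-\alpha_n w_n$ in cyclic order, so the $2n$ bracket values are, up to sign, $[w_{i+1},w_i]$, then $\alpha_n[w_n,w_1]\cdot(\pm)$ at the wrap from $w_n$ to $-\alpha_1 w_1$... Let me just say: the equal-area condition forces, for each $i$, $\alpha_i\,[w_{i-1},w_i]\cdot(\text{something})$ to match its ``opposite'' bracket coming from the $(n+i)$-th edge. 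Pairing edge transition $i$ with edge transition $n+i$ gives, because $e_{n+j}=-\alpha_j e_j$, a relation of the form $\alpha_{i-1}\alpha_i = 1$ (the two sign-flips and the common bracket $[w_{i-1},w_i]$ cancel). Thus equal-area is equivalent to $\alpha_{i-1}\alpha_i=1$ for all $i$ taken cyclically.

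From $\alpha_{i-1}\alpha_i=1$ for all $i$ we get $\alpha_{i+1}=\alpha_{i-1}$, so the sequence $\alpha_i$ is $2$-periodic: $\alpha_i=a$ for $i$ odd, $\alpha_i=1/a$ for $i$ even, for some $a>0$. If $n$ is even this $2$-periodicity is consistent for all $i$, but then the closure relation $\sum(1-\alpha_i)w_i=0$ reads $(1-a)\sum_{i\ \mathrm{odd}}w_i+(1-1/a)\sum_{i\ \mathrm{even}}w_i=0$; pairing with a suitable $w_k$ via the determinant (exactly the trick in Proposition~\ref{prop:cuspsCSS}) and using $[w_1,w_i]$ has a definite sign forces $a=1$, hence the polygon is symmetric. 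If $n$ is odd, $2$-periodicity of a cyclic sequence of odd length forces $a=1/a$, i.e. $a=1$, UNLESS we allow the indexing to ``flip'': the honest statement is that the only $2$-periodic assignment on an odd cycle has $\alpha_i\equiv a$ with $a^2=1$, giving either $a=1$ (symmetric) or the ``alternating up to the odd defect'' configuration, which is precisely the construction preceding the proposition with parameter $\alpha=a$. I would verify that the described construction indeed satisfies $\alpha_{i-1}\alpha_i=1$ (reading off $e_i,e_{n+i}$) and the closure relation, so it genuinely produces CPOS equal-area polygons, and then conclude that for $n$ odd every CPOS equal-area polygon is either symmetric or, after relabeling, of that form.

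The main obstacle I anticipate is purely bookkeeping: getting the signs and the index-shifts in the equal-area system exactly right, in particular tracking how the bracket at a ``corner'' of the edge cycle relates edges $w_n$ and $-\alpha_1 w_1$, and correctly identifying which pair of consecutive-edge brackets around the polygon must be equated. Once the clean relation $\alpha_{i-1}\alpha_i=1$ is established, the parity dichotomy and the closure-via-determinant argument are short and mirror the reasoning already given for Proposition~\ref{prop:cuspsCSS}. A secondary point requiring a little care is the phrase ``obtained by the above construction'': I should be explicit that this is meant up to cyclic relabeling and orientation, since the construction as written singles out odd versus even indices.
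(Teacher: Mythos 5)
Your setup is the same as the paper's (compare the $2n$ consecutive edge brackets in pairs $i$ and $n+i$), but there is a concrete error exactly at the point you flagged as ``bookkeeping'': the wrap-around relation is not $\alpha_n\alpha_1=1$. Writing the edges in cyclic order as $e_1,\dots,e_n,-\alpha_1e_1,\dots,-\alpha_ne_n$, the two ``corner'' brackets are $[e_n,-\alpha_1e_1]=-\alpha_1[e_n,e_1]$ and $[-\alpha_ne_n,e_1]=-\alpha_n[e_n,e_1]$, so equating them gives $\alpha_1=\alpha_n$, while the interior pairs give $\alpha_i\alpha_{i+1}=1$ only for $1\le i\le n-1$. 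Your claim that ``equal-area is equivalent to $\alpha_{i-1}\alpha_i=1$ for all $i$ taken cyclically'' is therefore false, and it cannot be repaired by reinterpretation: imposing $\alpha_n\alpha_1=1$ as well would, for $n$ odd, force $a=1/a$ and hence $a=1$, i.e.\ it would prove that every CPOS equal-area polygon is symmetric, contradicting the non-symmetric construction the proposition is about. You noticed this tension, but the escape clause ``unless we allow the indexing to flip'' is not an argument; with your relations the odd case genuinely has no non-symmetric solutions.

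With the correct system ($\alpha_i\alpha_{i+1}=1$ for $i<n$, plus $\alpha_1=\alpha_n$) the parity dichotomy comes out the other way around and needs no closure condition at all: the chain gives $\alpha_i=a$ for $i$ odd and $\alpha_i=1/a$ for $i$ even, so for $n$ even the constraint $\alpha_n=\alpha_1$ reads $1/a=a$ and forces $a=1$ (symmetric), while for $n$ odd it reads $a=a$ and is vacuous, yielding exactly the construction with parameter $a$. This also removes your appeal to the closure relation $\sum(1-\alpha_i)w_i=0$ in the even case, which as written is itself incomplete: to conclude $a=1$ from $(1-a)\sum_{i\ \mathrm{odd}}w_i+(1-1/a)\sum_{i\ \mathrm{even}}w_i=0$ you would need the two sums to be linearly independent, and the determinant trick from Proposition~\ref{prop:cuspsCSS} does not apply because the brackets $[w_i,w_j]$ with $i>j$ have the opposite sign, so the mixed sum has no definite sign. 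Fix the wrap-around relation and the rest of your argument collapses to the paper's short proof.
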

\begin{proof}
Let $e_1,e_2...,e_n, -\alpha_1e_1,...,-\alpha_ne_n$ denote the sides of an equal area polygon ${\mathcal P}$.  Then
$$
[e_1,e_2]=[e_2,e_3]=...=-\alpha_1[e_n,e_1]=\alpha_1\alpha_2[e_1,e_2]=...=-\alpha_n[e_n,e_1].
$$
Thus $\alpha_1=\alpha_n$ and $\alpha_i\alpha_{i+1}=1$. If $\alpha_i=1$ for some $i$, then $\alpha_i=1$ for any $i$ and the polygon is symmetric. If $\alpha_i\neq 1$, then necessarily
$n$ is odd and $\alpha_{i+1}=\alpha_i^{-1}$. Thus the polygon is obtained by the above construction.
\end{proof}

We shall refer to the above constructed polygons as non-symmetric equal-area CPOS polygons (see figure \ref{CPOSEA} and Applets). 

\begin{figure}[htb]
 \centering
 \includegraphics[width=1.0\linewidth]{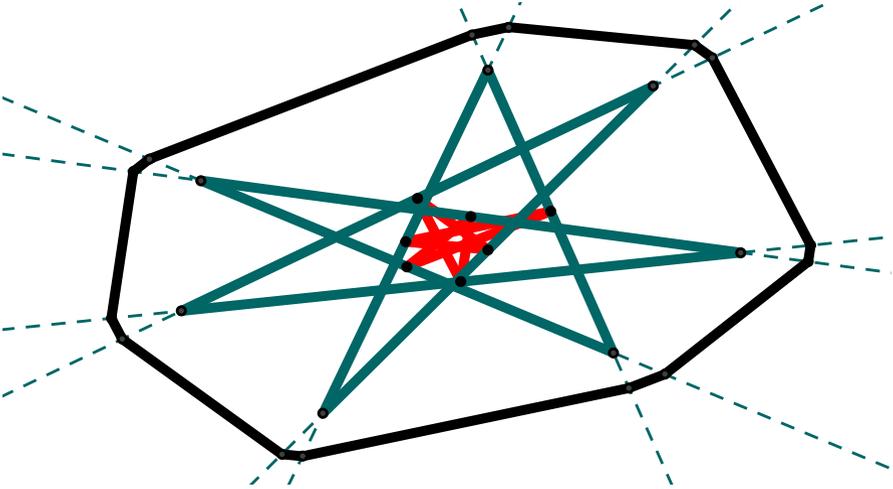}
 \caption{A CPOS $14$-gon non-symmetric equal-area. }
\label{CPOSEA}
\end{figure}

\begin{proposition}
Consider a non-symmetric equal-area CPOS polygon ${\mathcal P}$. Then $M_i$ is the midpoint of $D(i-\tfrac{1}{2})D(i+\tfrac{1}{2})$. As a consequence,
the Area Evolute and the Central Symmetry Set have $n$ cusps, i.e., all vertices are cusps. 
\end{proposition}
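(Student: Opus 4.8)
The plan is to put the three relevant points $D(i-\tfrac12)$, $M_i$ and $D(i+\tfrac12)$ on the single great diagonal $d_i$ and compare their barycentric coordinates there. By definition $D(i+\tfrac12)=(1-\lambda(i+\tfrac12))P_i+\lambda(i+\tfrac12)P_{i+n}$, while $M_i=\tfrac12 P_i+\tfrac12 P_{i+n}$. The first step is to show that $D(i-\tfrac12)$ — which a priori is only known to be the point $(1-\lambda(i-\tfrac12))P_{i-1}+\lambda(i-\tfrac12)P_{i+n-1}$ of $d_{i-1}$ — is also equal to $(1-\lambda(i-\tfrac12))P_i+\lambda(i-\tfrac12)P_{i+n}$. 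Subtracting these two expressions, the difference is $(1-\lambda(i-\tfrac12))\,e(i-\tfrac12)+\lambda(i-\tfrac12)\,e(i+n-\tfrac12)$, and this vanishes by the relation $e(i+n-\tfrac12)=\tfrac{\lambda(i-\tfrac12)-1}{\lambda(i-\tfrac12)}\,e(i-\tfrac12)$ already recorded above (the displayed identity, taken at index $i-1$). Once this is known, $D(i-\tfrac12)+D(i+\tfrac12)=\big(2-\lambda(i-\tfrac12)-\lambda(i+\tfrac12)\big)P_i+\big(\lambda(i-\tfrac12)+\lambda(i+\tfrac12)\big)P_{i+n}$, so $M_i$ is the midpoint of $D(i-\tfrac12)D(i+\tfrac12)$ exactly when $\lambda(i-\tfrac12)+\lambda(i+\tfrac12)=1$.

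The second step would be to derive $\lambda(i-\tfrac12)+\lambda(i+\tfrac12)=1$ from the equal-area hypothesis. Writing the equal-area identity at the index $i+n$, namely $[\,e(i+n+\tfrac12),e(i+n-\tfrac12)\,]=[\,e(i+\tfrac12),e(i-\tfrac12)\,]$, and substituting the two instances $e(i+n\pm\tfrac12)=\tfrac{\lambda(i\pm\tfrac12)-1}{\lambda(i\pm\tfrac12)}\,e(i\pm\tfrac12)$ of the recorded relation, the common bracket $[\,e(i+\tfrac12),e(i-\tfrac12)\,]$ (nonzero, since adjacent sides are not parallel) cancels and we are left with $(\lambda(i+\tfrac12)-1)(\lambda(i-\tfrac12)-1)=\lambda(i+\tfrac12)\lambda(i-\tfrac12)$, which expands precisely to $\lambda(i-\tfrac12)+\lambda(i+\tfrac12)=1$. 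This proves the first assertion; one notices that the argument in fact works for every equal-area CPOS polygon, symmetric or not.

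For the consequence, the relation $\lambda(i-\tfrac12)+\lambda(i+\tfrac12)=1$, holding for all $i$, forces $\lambda(i+\tfrac12)=\lambda(i-\tfrac32)$, so the cyclic sequence $\lambda(i+\tfrac12)$ is $2$-periodic (consistent, since $2n$ is even) and takes two values $a$ and $1-a$ alternately. If $\mathcal P$ is not symmetric, the $\lambda(i+\tfrac12)$ are not all equal to $\tfrac12$, hence $a\neq\tfrac12$ and the two values are distinct and lie strictly on opposite sides of $\tfrac12$. Then every entry of the sequence $\lambda(i+\tfrac12)$ is a strict local extremum of the cyclic sequence, so every vertex $D(i+\tfrac12)$ of the central symmetry set is a cusp; and at every $i$ the values $\lambda(i-\tfrac12)$ and $\lambda(i+\tfrac12)$ straddle $\tfrac12$, so the defining sign condition for a cusp of the area evolute is met at every vertex $M_i$. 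Since the central symmetry set and the area evolute each have exactly $n$ vertices, each has $n$ cusps. (Moreover $a\neq 1-a$ yields $D(i-\tfrac12)\neq D(i+\tfrac12)$, so the simplifying assumption of this subsection holds automatically here.)

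The part I expect to need the most care is the index bookkeeping: one must check that $\lambda(i-\tfrac12)+\lambda(i+\tfrac12)=1$ and the ensuing $2$-periodicity really hold all the way around the $2n$-gon, and in particular across the junction between the ranges $1\le i\le n$ and $n+1\le i\le 2n$ (using $\lambda(i+n+\tfrac12)=1-\lambda(i+\tfrac12)$); this is exactly the place where the oddness of $n$, built into the very existence of non-symmetric equal-area CPOS polygons, quietly enters.
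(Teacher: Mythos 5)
Your proof is correct, and it takes a genuinely different route from the paper's. The paper leans on the classification established just before the proposition (every non-symmetric equal-area CPOS polygon has sides alternating as $w_i$ and $-\alpha w_i$), writes $P_{i+1}-P_i=-\alpha(P_{i+n+1}-P_{i+n})$ and hence $P_{i+n}-P_{i+n-1}=-\alpha(P_i-P_{i-1})$, and then verifies by direct computation that $D(i\pm\tfrac12)=M_i\pm\tfrac{\alpha-1}{2(1+\alpha)}(P_{i+n}-P_i)$, from which both claims are immediate. You instead work entirely with the parameters $\lambda(i\pm\tfrac12)$: the observation that $D(i-\tfrac12)$ has the \emph{same} barycentric coordinate $\lambda(i-\tfrac12)$ on $d_i$ as on $d_{i-1}$ (a clean consequence of the recorded relation $e(i+n+\tfrac12)=\tfrac{\lambda(i+\tfrac12)-1}{\lambda(i+\tfrac12)}e(i+\tfrac12)$) reduces the midpoint claim to $\lambda(i-\tfrac12)+\lambda(i+\tfrac12)=1$, which you then extract from the equal-area condition without ever invoking the classification. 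This is more structural and slightly more general — it shows the midpoint property for \emph{every} equal-area CPOS polygon, with non-symmetry entering only to force the alternating values $a,1-a$ to be distinct — and it also explains conceptually why all vertices are cusps and why $n$ must be odd; the paper's computation is shorter but opaque about these points. One remark: for the Area Evolute cusps you use the condition that $\lambda(i-\tfrac12)$ and $\lambda(i+\tfrac12)$ lie on opposite sides of $\tfrac12$, whereas the paper's printed definition reads $\lambda(i_0-\tfrac12)\cdot\lambda(i_0+\tfrac12)<0$. Your reading is the right one (the literal condition is vacuous since convexity forces $\lambda(i+\tfrac12)\in(0,1)$, and the geometric meaning via cusps of the equidistant $\mathcal{P}_{1/2}$ in Section 3 requires $\tfrac12$ to lie between the two values), but it would be worth stating explicitly that you are using this corrected form of the definition.
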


\begin{proof}
Write $P_{i+1}-P_i=-\alpha \left( P_{i+n+1}-P_{i+n} \right)$ for some $\alpha>0$. Then
$P_{i+n}-P_{i+n-1}=-\alpha\left( P_{i}-P_{i-1}\right)$. Taking
$\lambda=\frac{\alpha-1}{2(1+\alpha)}$, straightforward calculations show that
$$
D(i\pm\tfrac{1}{2})=M_i\pm\lambda(P_{i+n}-P_i),
$$
which proves that $M_i$ is the mid-point of $D(i-\tfrac{1}{2})D(i+\tfrac{1}{2})$. From this, one easily concludes
that each point of the Central Symmetry Set is a cusp. 
\end{proof}

\section{Equidistants}\label{sec:equidistants}

An {\it equidistant} at level $\lambda\in\R$ is the polygon ${\mathcal P}_{\lambda}$ whose vertices are
$$
P_i(\lambda)= P_i+\lambda (P_{n+i}-P_i).
$$
For $\lambda=\frac{1}{2}$, the equidistant is exactly the area evolute (see figure \ref{Equidistants} and Applets). 
The edges of ${\mathcal P}_{\lambda}$ are $e(i+\tfrac{1}{2})(\lambda)=P_{i+1}(\lambda)-P_{i}(\lambda)$. Observe that
$$
e(i+\tfrac{1}{2})(\lambda)=(1-\lambda)e(i+\tfrac{1}{2})+\lambda e(i+n+\tfrac{1}{2}).
$$

\subsection{ Cusps of the equidistants and the Central Symmetry Set}

Denote $f_i(\lambda)=[e(i-\tfrac{1}{2})(\lambda), e(i+\tfrac{1}{2})(\lambda)]$. Observe that $f_i(0)>0$ and $f_i$ changes sign only when
$e(i-\tfrac{1}{2})(\lambda)$ or $e(i+\tfrac{1}{2})(\lambda)$ vanishes. Thus $f_i(\lambda)<0$ if an only if $e(i-\tfrac{1}{2})(\lambda)$ and $e(i+\tfrac{1}{2})(\lambda)$
are in the same half-plane determined by $d_i$.
We say that a vertex $P_i(\lambda)$ is a {\it cusp} of the equidistant ${\mathcal P}_{\lambda}$ if $f_i(\lambda)<0$. 

\begin{proposition}
The set formed by the cusps of the $\lambda$-equidistants coincides with the Central Symmetry Set.
\end{proposition}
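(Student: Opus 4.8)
The plan is to analyze, for each fixed index $i$, the whole great diagonal $d_i$ traced out by $\lambda\mapsto P_i(\lambda)$, and to pin down exactly which of its points are cusps of some equidistant. First I would use that opposite sides of ${\mathcal P}$ are parallel: the vector $e(i+\tfrac{1}{2})(\lambda)=(1-\lambda)e(i+\tfrac{1}{2})+\lambda e(i+n+\tfrac{1}{2})$ is a scalar multiple of the fixed vector $e(i+\tfrac{1}{2})$, and likewise $e(i-\tfrac{1}{2})(\lambda)$ is a scalar multiple of $e(i-\tfrac{1}{2})$, each scalar being an affine (degree one) function of $\lambda$. Hence $f_i(\lambda)=[e(i-\tfrac{1}{2})(\lambda),e(i+\tfrac{1}{2})(\lambda)]$ equals the positive number $[e(i-\tfrac{1}{2}),e(i+\tfrac{1}{2})]$ times the product of those two affine functions. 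Since $f_i(0)>0$ and the leading coefficient of this quadratic is positive, $f_i(\lambda)<0$ holds precisely for $\lambda$ strictly between the two roots $\lambda_a$ and $\lambda_b$, characterized by $e(i-\tfrac{1}{2})(\lambda_a)=0$ and $e(i+\tfrac{1}{2})(\lambda_b)=0$.

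Next I would identify these two roots geometrically. The edge $e(i+\tfrac{1}{2})(\lambda)=P_{i+1}(\lambda)-P_{i}(\lambda)$ vanishes iff $P_i(\lambda)=P_{i+1}(\lambda)$; but $P_i(\lambda)\in d_i$ and $P_{i+1}(\lambda)\in d_{i+1}$, so this common point must be $d_i\cap d_{i+1}=D(i+\tfrac{1}{2})$. Comparing with the definition $D(i+\tfrac{1}{2})=P_i+\lambda(i+\tfrac{1}{2})(P_{i+n}-P_i)$ gives $\lambda_b=\lambda(i+\tfrac{1}{2})$ and $P_i(\lambda_b)=D(i+\tfrac{1}{2})$. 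Symmetrically, $e(i-\tfrac{1}{2})(\lambda)$ vanishes iff $P_i(\lambda)=P_{i-1}(\lambda)\in d_{i-1}\cap d_i=D(i-\tfrac{1}{2})$, so $P_i(\lambda_a)=D(i-\tfrac{1}{2})$. Because $P_i\neq P_{i+n}$, the map $\lambda\mapsto P_i(\lambda)$ is an injective affine parametrization of the line $d_i$, and therefore the set $\{\,P_i(\lambda):\lambda\in\R,\ f_i(\lambda)<0\,\}$ is exactly the open segment of $d_i$ joining $D(i-\tfrac{1}{2})$ to $D(i+\tfrac{1}{2})$.

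Finally I would assemble the pieces. The points $D(i-\tfrac{1}{2})$ and $D(i+\tfrac{1}{2})$ are consecutive vertices of the Central Symmetry Set and both lie on $d_i$, so the segment joining them is precisely the $i$-th edge of the Central Symmetry Set; ranging $i$ over $1,\dots,n$, the cusps of all equidistants fill out every edge, hence the whole polygonal line (the finitely many vertices $D(i+\tfrac{1}{2})$ are reached as common endpoints of two adjacent edges, each at the level where a side of the corresponding equidistant degenerates). I do not expect a serious obstacle; the only point requiring care is the degenerate configuration $D(i-\tfrac{1}{2})=D(i+\tfrac{1}{2})$, i.e. three consecutive great diagonals concurrent, in which case $\lambda_a=\lambda_b$, $f_i(\lambda)\geq 0$ for all $\lambda$, and $d_i$ carries no cusps -- consistent with the $i$-th edge of the Central Symmetry Set having collapsed to a point. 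It is also worth recording that the equivalence "$f_i(\lambda)<0$ iff $e(i-\tfrac{1}{2})(\lambda)$ and $e(i+\tfrac{1}{2})(\lambda)$ lie in the same half-plane determined by $d_i$", already noted in the text, is exactly what says that $P_i(\lambda)$ is a genuine cusp of ${\mathcal P}_\lambda$ rather than a smooth vertex.
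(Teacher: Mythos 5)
Your proof is correct and follows essentially the same route as the paper's: both factor $f_i(\lambda)$ as a quadratic vanishing at $\lambda(i-\tfrac{1}{2})$ and $\lambda(i+\tfrac{1}{2})$ and identify the resulting negativity interval with the edge $D(i-\tfrac{1}{2})D(i+\tfrac{1}{2})$ of the Central Symmetry Set. You simply supply more detail (the explicit factorization via parallelism of opposite sides, the identification of the roots with $d_i\cap d_{i\pm1}$, and the degenerate case $D(i-\tfrac{1}{2})=D(i+\tfrac{1}{2})$) than the paper's terser argument.
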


\begin{proof}
We write
$$
e(i\pm\tfrac{1}{2})(\lambda)=(1-\lambda)e(i\pm\tfrac{1}{2})+\lambda e(i+n\pm\tfrac{1}{2}).
$$
Thus $f_i(\lambda)$ is a quadratic function that vanishes at $\lambda(i-\tfrac{1}{2})$ and $\lambda(i+\tfrac{1}{2})$ corresponding 
to the intersections of $d_i$ with $d_{i-1}$ and $d_{i+1}$, respectively. Thus $f_i(\lambda)<0$ only for $\lambda$ between
$\lambda(i-\tfrac{1}{2})$ and $\lambda(i+\tfrac{1}{2})$, which correspond to points of the Central Symmetry Set.
\end{proof}

\begin{figure}[htb]
 \centering
 \includegraphics[width=1.0\linewidth]{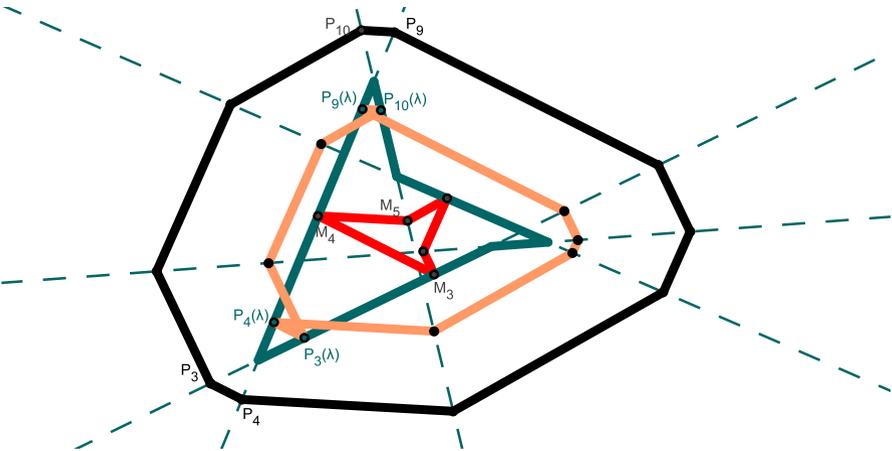}
 \caption{An equidistant with vertices $P_i(\lambda )$ with $\lambda=0.2$. The labeled vertices $P_i(\lambda )$ are cusps for this equidistant. When $\lambda = 1/2$, note that $M_1$, $M_3$ and $M_4$, which are on the Central Symmetry Set, are cusps, but $M_2$ and $M_5$ are not. }
\label{Equidistants}
\end{figure}

\subsection{Equidistant symmetry set}

The self intersection of the equidistants form a set that we call {\it equidistant symmetry set} (see figure \ref{EqSymSet} and Applets).

Given non-parallel sides $e(i+\tfrac{1}{2}),\ e(j+\tfrac{1}{2})$, denote by $P(i+\tfrac{1}{2},j+\tfrac{1}{2})$ the intersection of their support lines. Denote also by $l(i+\tfrac{1}{2},j+\tfrac{1}{2})$
the line passing through $P(i+\tfrac{1}{2},j+\tfrac{1}{2})$ and $P(i+n+\tfrac{1}{2},j+n+\tfrac{1}{2})$. Observe that the support line of an edge of the Equidistant Symmetry Set associated with a pair of sides 
$(i+\tfrac{1}{2},j+\tfrac{1}{2})$ is exactly $l(i+\tfrac{1}{2},j+\tfrac{1}{2})$. 

Consider two edges $(i-\tfrac{1}{2},j+\tfrac{1}{2})$ and $(i+\tfrac{1}{2},j+\tfrac{1}{2})$ of the Equidistant Symmetry Set with a common vertex $(i,j+\tfrac{1}{2})$. We say that the vertex is a {\it cusp}
if both edges are at the same side of $d_i$.

\begin{lemma}
A vertex of the Equidistant Symmetry Set which is not an endpoint is a cusp if and only if it belongs to the Central Symmetry Set. 
\end{lemma}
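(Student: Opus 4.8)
The plan is to reduce the cusp condition for a vertex $(i,j+\tfrac12)$ of the Equidistant Symmetry Set to the cusp condition for the vertex $P_i(\lambda)$ of the equidistant $\mathcal P_\lambda$ that realizes this self-intersection, and then invoke the previous proposition identifying cusps of equidistants with the Central Symmetry Set. First I would pin down, for a vertex $(i,j+\tfrac12)$ of the Equidistant Symmetry Set, exactly which equidistant produces it: the two edges meeting at this vertex lie on the lines $l(i-\tfrac12,j+\tfrac12)$ and $l(i+\tfrac12,j+\tfrac12)$, and the common vertex is the point where the equidistant $\mathcal P_{\lambda}$, for the appropriate $\lambda=\lambda(i,j+\tfrac12)$, has a self-intersection at which the sides $e(i-\tfrac12)(\lambda)$, $e(i+\tfrac12)(\lambda)$ and the two opposite sides all pass through one point. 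Since the edge of $\mathcal P_\lambda$ issuing from $P_{i-1}(\lambda)$ to $P_i(\lambda)$ lies on $l(i-\tfrac12,\,\cdot\,)$ and the edge from $P_i(\lambda)$ to $P_{i+1}(\lambda)$ lies on $l(i+\tfrac12,\,\cdot\,)$, the vertex $(i,j+\tfrac12)$ of the Equidistant Symmetry Set is in fact the vertex $P_i(\lambda)$ of that particular equidistant.

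Next I would translate "both edges of the Equidistant Symmetry Set at $(i,j+\tfrac12)$ are on the same side of $d_i$" into a statement about the sign of $f_i(\lambda)=[e(i-\tfrac12)(\lambda),e(i+\tfrac12)(\lambda)]$. The two incident edges of the Equidistant Symmetry Set lie along $e(i-\tfrac12)(\lambda)$ and $e(i+\tfrac12)(\lambda)$ respectively (these are the directions of the sides $l(i\mp\tfrac12,j+\tfrac12)$ of $\mathcal P_\lambda$ through $P_i(\lambda)$), and the great diagonal $d_i=d_i(\lambda)$ of $\mathcal P_\lambda$ passes through $P_i(\lambda)$ in the fixed direction $P_{i+n}-P_i$. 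So the two incident edges being on the same side of $d_i$ is precisely the condition that $e(i-\tfrac12)(\lambda)$ and $e(i+\tfrac12)(\lambda)$ point into the same half-plane bounded by $d_i$, which by the discussion preceding the proposition on cusps of equidistants is exactly $f_i(\lambda)<0$, i.e. $P_i(\lambda)$ is a cusp of $\mathcal P_\lambda$. Applying that proposition, $f_i(\lambda)<0$ holds if and only if $\lambda$ lies strictly between $\lambda(i-\tfrac12)$ and $\lambda(i+\tfrac12)$, equivalently $P_i(\lambda)$ lies on the segment of the Central Symmetry Set joining $D(i-\tfrac12)$ and $D(i+\tfrac12)$.

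Finally I would check that "$P_i(\lambda)$ on the Central Symmetry Set" really is what we want: the vertex $(i,j+\tfrac12)$ lies on $d_i$ (both incident support lines $l(i\mp\tfrac12,j+\tfrac12)$ pass through $P_i(\lambda)$, which is on $d_i$ by construction), so it belongs to the Central Symmetry Set exactly when it lies between the consecutive vertices $D(i-\tfrac12)$ and $D(i+\tfrac12)$ — and that is the condition just obtained. The hypothesis that the vertex is not an endpoint of a branch of the Equidistant Symmetry Set is what guarantees that the relevant $\lambda$ is an interior value and that both edges genuinely exist and are nondegenerate, so the sign discussion of $f_i$ applies; the endpoint cases (where $\lambda$ reaches $\lambda(i\pm\tfrac12)$ or one of the sides $e(i\pm\tfrac12)(\lambda)$ degenerates) are precisely excluded. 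I expect the main obstacle to be the bookkeeping of the first step — carefully verifying that the two edges of the Equidistant Symmetry Set meeting at $(i,j+\tfrac12)$ are produced by one and the same equidistant and that the vertex coincides with $P_i(\lambda)$ for that $\lambda$, as opposed to two different levels — since everything afterward is a direct appeal to the already-established equidistant cusp proposition.
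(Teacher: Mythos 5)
Your overall strategy -- identify the vertex $(i,j+\tfrac{1}{2})$ of the Equidistant Symmetry Set with the point $P_i(\lambda_0)$ of the equidistant whose self-intersection passes through a vertex, note that it lies on $d_i$, and reduce the cusp question to the sign of $f_i(\lambda_0)$ -- is the right one and is essentially the paper's. The first and third paragraphs are fine. But the pivotal second step contains a genuine error: the two incident edges of the Equidistant Symmetry Set do \emph{not} ``lie along $e(i-\tfrac{1}{2})(\lambda)$ and $e(i+\tfrac{1}{2})(\lambda)$.'' They lie on the fixed lines $l(i-\tfrac{1}{2},j+\tfrac{1}{2})$ and $l(i+\tfrac{1}{2},j+\tfrac{1}{2})$ (each passing through $P(i\pm\tfrac{1}{2},j+\tfrac{1}{2})$ and $P(i+n\pm\tfrac{1}{2},j+n+\tfrac{1}{2})$), and these lines are not the support lines of the equidistant edges at level $\lambda_0$, nor do they share their directions. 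So the asserted equivalence ``both ESS edges on the same side of $d_i$ $\iff$ $e(i-\tfrac{1}{2})(\lambda_0)$ and $e(i+\tfrac{1}{2})(\lambda_0)$ in the same half-plane of $d_i$'' is exactly the content that needs proving, and your justification for it rests on a false identification.

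The missing ingredient is a dynamic argument in $\lambda$. A point of the ESS edge associated with $(i+\tfrac{1}{2},j+\tfrac{1}{2})$ near the vertex is, for some $\lambda$ near $\lambda_0$, a self-intersection of ${\mathcal P}_{\lambda}$ lying on the actual segment $e(i+\tfrac{1}{2})(\lambda)$, hence on the side of $d_i$ containing that segment; similarly for the edge associated with $(i-\tfrac{1}{2},j+\tfrac{1}{2})$. One must then decide whether these two families of intersections occur for $\lambda$ on one side of $\lambda_0$ only (both ESS edges emanate into the same half-plane: a cusp) or for $\lambda$ on opposite sides of $\lambda_0$ (the ESS crosses $d_i$: a regular vertex). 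The paper settles this by introducing the region $R_i(\lambda)$ bounded by the support lines of $e(i-\tfrac{1}{2})(\lambda)$ and $e(i+\tfrac{1}{2})(\lambda)$ and observing that the support line of $e(j+\tfrac{1}{2})(\lambda)$ avoids it: when $P_i(\lambda_0)$ is a cusp of the equidistant, the edge $e(j+\tfrac{1}{2})(\lambda)$ meets both of $e(i\pm\tfrac{1}{2})(\lambda)$ only for $\lambda$ on one side of $\lambda_0$, and when it is a regular vertex the intersection point passes transversally from one edge to the other. Without this (or an equivalent) argument, your proof does not close; everything after it is indeed a direct appeal to the equidistant-cusp proposition, as you say, but the bridge to that proposition is the actual content of the lemma.
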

\begin{proof}
The support lines of $e(i-\tfrac{1}{2})(\lambda)$ and $e(i+\tfrac{1}{2})(\lambda)$ determine a planar region which does not contain any other support
line of $e(j+\tfrac{1}{2})(\lambda)$, $j\neq i-1,i$. We shall denote this region by $R_i(\lambda)$.

Consider a vertex $(i,j+\tfrac{1}{2})$ at level $\lambda_0$. Assume the $P_{i}(\lambda_0)$ is at the Central Symmetry Set. Then the $\lambda_0$-equidistant has a cusp at this point. Observe that
the support line of $e(j+\tfrac{1}{2})(\lambda)$ is not contained in $R_i(\lambda)$. 
Thus, for $\lambda$ close to $\lambda_0$,
the edge $e(j+\tfrac{1}{2})(\lambda)$ intersects $e(i+\tfrac{1}{2})(\lambda)$ and $e(i-\tfrac{1}{2})(\lambda)$ only for $\lambda>\lambda_0$ or $\lambda<\lambda_0$. 
For definiteness, we shall assume that these intersections occur for $\lambda>\lambda_0$. Thus, for $\lambda>\lambda_0$ we have two different intersections of the equidistants and thus
the Equidistant Symmetry Set will be on one side of $d_i$, which means a cusp.

Conversely, if the vertex $(i,j+\tfrac{1}{2})$ is not on the Central Symmetry Set, then the $\lambda_0$-equidistant have a regular point. For $\lambda$ close to $\lambda_0$,
the edge $e(j+\tfrac{1}{2})(\lambda)$ intersects $e(i+\tfrac{1}{2})(\lambda)$ and $e(i-\tfrac{1}{2})(\lambda)$ once. Thus the corresponding edges of the Central Symmetry Set cross the line $d_i$, and thus 
is a regular point of the Equidistant Symmetry Set.
\end{proof}

\begin{proposition}\label{BranchESS}
Every branch of the Equidistant Symmetry Set can be continued until it reaches a cusp of the Central Symmetry Set or a cusp of the Area Evolute (see figure \ref{EqSymSet}).  
\end{proposition}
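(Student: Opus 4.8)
The plan is to follow a branch of the Equidistant Symmetry Set, using the level $\lambda$ of the equidistants as the running parameter, and to isolate the only configurations at which such a branch fails to be continued. Consider first one edge of the Equidistant Symmetry Set, associated with a pair of sides $(i+\tfrac12,j+\tfrac12)$: it is the arc of the line $l(i+\tfrac12,j+\tfrac12)$ traced by the self-intersection point $Q(\lambda)$ of $e(i+\tfrac12)(\lambda)$ and $e(j+\tfrac12)(\lambda)$. Since these sides keep the fixed directions of $e(i+\tfrac12)$ and $e(j+\tfrac12)$, writing $Q(\lambda)=P_i(\lambda)+s(\lambda)e(i+\tfrac12)=P_j(\lambda)+t(\lambda)e(j+\tfrac12)$ gives a linear system with constant matrix and right-hand side affine in $\lambda$; hence $Q(\lambda)$ is affine in $\lambda$, so $\lambda$ is a strictly monotone parameter along the edge. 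The requirement that $Q(\lambda)$ actually lie on the \emph{segments} $e(i+\tfrac12)(\lambda)$ and $e(j+\tfrac12)(\lambda)$, and not merely on their support lines, amounts — after dividing by the scalars $\tfrac{\lambda(i+\frac12)-\lambda}{\lambda(i+\frac12)}$ and $\tfrac{\lambda(j+\frac12)-\lambda}{\lambda(j+\frac12)}$ that measure the lengths of those sides — to asking that two Möbius functions of $\lambda$, with poles at $\lambda(i+\tfrac12)$ and $\lambda(j+\tfrac12)$ respectively, take values in $[0,1]$.

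Now follow the branch. When $\lambda$ reaches a value $\lambda_0$ at which one of these Möbius functions equals $0$ or $1$, the point $Q(\lambda_0)$ is a vertex of $\mathcal P_{\lambda_0}$ lying on a great diagonal, i.e.\ a vertex of the Equidistant Symmetry Set; by the preceding Lemma it is a cusp precisely when it lies on the Central Symmetry Set, and in either case — regular point or cusp — the branch continues through it, carried by the side of $\mathcal P_{\lambda_0}$ adjacent to the old one at that vertex (with $\lambda$ passing through $\lambda_0$ monotonically at a regular point and turning around at a cusp). Hence a branch can fail to be continued only at a $\lambda_0$ where the self-intersection it carries is genuinely destroyed inside $\mathcal P_{\lambda_0}$, which forces a degeneration of $\mathcal P_\lambda$, and there are only two of these. (a) A single side collapses, at $\lambda_0=\lambda(i+\tfrac12)$, shrinking onto the Central Symmetry Set vertex $D(i+\tfrac12)$; I would check that for $\lambda$ just past $\lambda(i+\tfrac12)$ this side reopens with reversed orientation and the self-intersection reappears carried by a neighbouring pair of sides, unless $\lambda(i+\tfrac12)$ is a local extremum of the cyclic sequence $\{\lambda(k+\tfrac12)\}$, that is, unless $D(i+\tfrac12)$ is a cusp of the Central Symmetry Set, in which case the branch stops there. (b) Because $d_{k+n}=d_k$, the only level at which two sides can degenerate together is $\lambda_0=\tfrac12$, where $\mathcal P_\lambda$ folds in half onto the area evolute and the opposite sides $e(k+\tfrac12)(\lambda)$, $e(k+n+\tfrac12)(\lambda)$ come together along an edge $M_kM_{k+1}$; inspecting the self-intersections of $\mathcal P_\lambda$ near $M_k$ for $\lambda$ close to $\tfrac12$ — where the colliding sides tend to the two area-evolute edges meeting at $M_k$ — one sees that a branch can reach $M_k$ exactly when those two edges fold back, i.e.\ when $M_k$ lies on the Central Symmetry Set, which is the cusp condition for the area evolute. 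Combining (a) and (b), a branch can be continued through every point that is neither a cusp of the Central Symmetry Set nor a cusp of the Area Evolute, so each branch terminates at one of these two kinds of points.

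The step I expect to be the main obstacle is exactly the local analysis in (a) and (b): one must verify that when $\lambda$ runs into a value at which a side, or a pair of opposite sides, of $\mathcal P_\lambda$ degenerates, the self-intersection carried by the branch is really extinguished only when the associated vertex ($D(i+\tfrac12)$, respectively $M_k$) satisfies the relevant cusp condition, and that the branch genuinely re-emerges otherwise. This is a discrete ``Reidemeister-I''--type bookkeeping — tracking the orientation of the collapsing side(s) together with the signs of the two Möbius parameters as $\lambda$ crosses the critical value — elementary, but the case distinctions (local maximum versus local minimum versus monotone crossing of the sequence $\lambda(k+\tfrac12)$, and the analogous trichotomy at $\lambda=\tfrac12$) must be handled with care.
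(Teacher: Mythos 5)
Your proposal follows the same basic strategy as the paper's proof --- trace a branch edge by edge and isolate the configurations where the continuation fails --- but with different bookkeeping: the paper records a branch as a combinatorial walk on pairs of side-indices $(i+\tfrac12,j+\tfrac12)$, one index changing by one at each vertex of the branch, and declares the walk blocked exactly when the new pair would be adjacent or opposite; you instead track the level $\lambda$ and locate the blockages at degenerations of $\mathcal{P}_\lambda$. The two classifications do agree: your case (a), the collapse of the side lying \emph{between} the two carrying sides at $\lambda=\lambda(i+\tfrac12)$, is the paper's ``new pair adjacent'' obstruction at $D(i+\tfrac12)$, and your case (b) is the paper's ``new pair opposite'' obstruction, which indeed forces $\lambda_0=\tfrac12$ and the point $M_k$ (a vertex of one carrying side landing on the diagonal $d_{k+n}=d_k$ carried by the opposite side forces $P_k(\lambda)=P_{k+n}(\lambda)$). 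Your identification of the terminal condition at $M_k$ with ``$M_k$ lies on the Central Symmetry Set'' also matches what the paper actually uses as the cusp condition for the area evolute (compare the caption of figure \ref{Equidistants} and the proof of Proposition \ref{prop:cuspsCSS}). Two caveats. First, your intermediate claim that ``in either case --- regular point or cusp --- the branch continues through it'' is too strong as stated: the terminal events are themselves vertices of the Equidistant Symmetry Set, and the continuation fails there precisely because the side taking over is parallel to, or shares a vertex with, the other carrying side; you recover this through the ``degeneration'' dichotomy, but the equivalence (a failed continuation at a vertex forces either the intermediate side to collapse or the two carrying sides to become opposite) is asserted rather than proved. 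Second, the local analyses in (a) and (b) --- that the self-intersection exists on one side of the critical level exactly when the relevant cusp condition holds --- are explicitly deferred in your write-up; these are the real content of the proposition, but the paper's own proof is equally silent about them, so on this point your argument is no less complete than the original.
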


\begin{proof}
Consider an edge $e(i+\tfrac{1}{2},j+\tfrac{1}{2})$ of the Equidistant Symmetry Set. This edge will end at the great diagonal  $d_i,d_{i+1},d_j$ or $d_{j+1}$. For definiteness, assume it is $d_{j+1}$. 
Then we can continue the Equidistant Symmetry Set by the edge $e(i+\tfrac{1}{2},j+1+\tfrac{1}{2})$. This is always possible except in two cases: 
(1) $|j+1-i|=1$ and we are at a cusp of the Central Symmetry Set or (2) $j+1=i+n$ and we are at a cusp of the Area Evolute. 
\end{proof}

\begin{figure}[htb]
\centering \fsep \subfigure[ A branch of the Equidistant Symmetry Set with both endpoints at the cusps of the Central Symmetry Set.] {
\includegraphics[width=.45
\linewidth,clip =false]{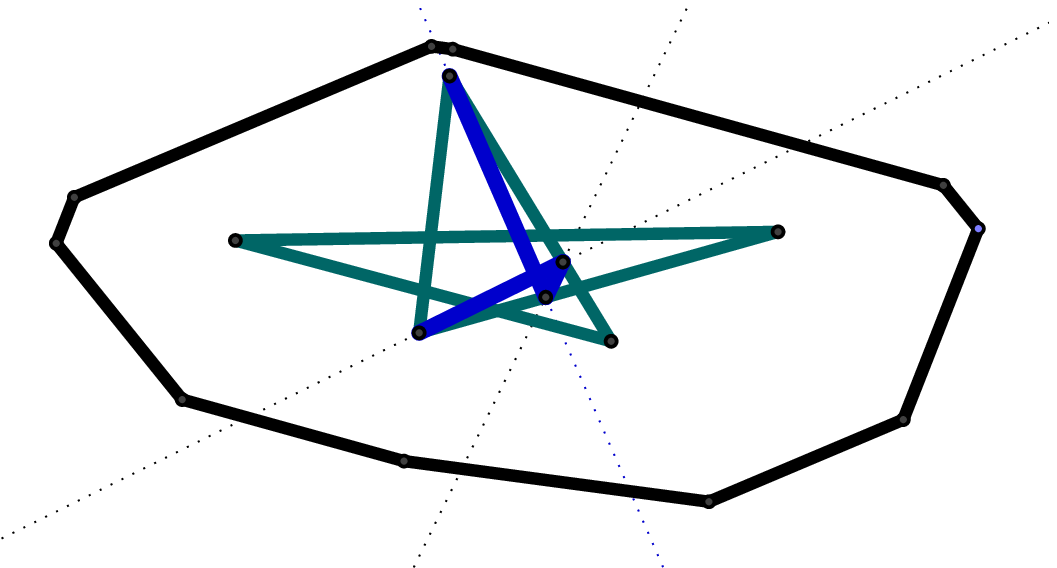}} \fsep\subfigure[
A branch of the Equidistant Symmetry Set with both endpoints at the cusps of the Area Evolute. ] {
\includegraphics[width=.45\linewidth,clip
=false]{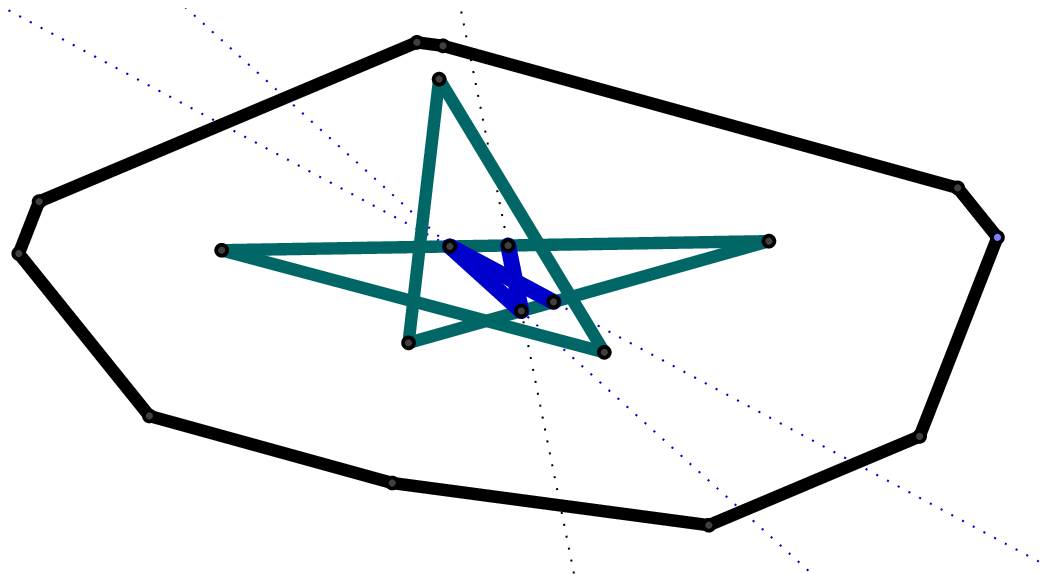}}\fsep
\caption{Branches of the Equidistant Symmetry Set.}
\label{EqSymSet}
\end{figure}

\section{The parallel-diagonal transform}\label{sec:central}

To a CPOS polygons ${\mathcal P}$, we can associate another CPOS polygon ${\mathcal Q}$ 
whose vertices are at the mid-parallel lines and whose sides are parallel to the great diagonals  of ${\mathcal P}$ (see figure \ref{PD} and Applets). 
In this section we prove the existence of such a CPOS polygon ${\mathcal Q}$, called the { \it parallel-diagonal transform} of ${\mathcal P}$.

\begin{figure}[htb]
 \centering
 \includegraphics[width=1.0\linewidth]{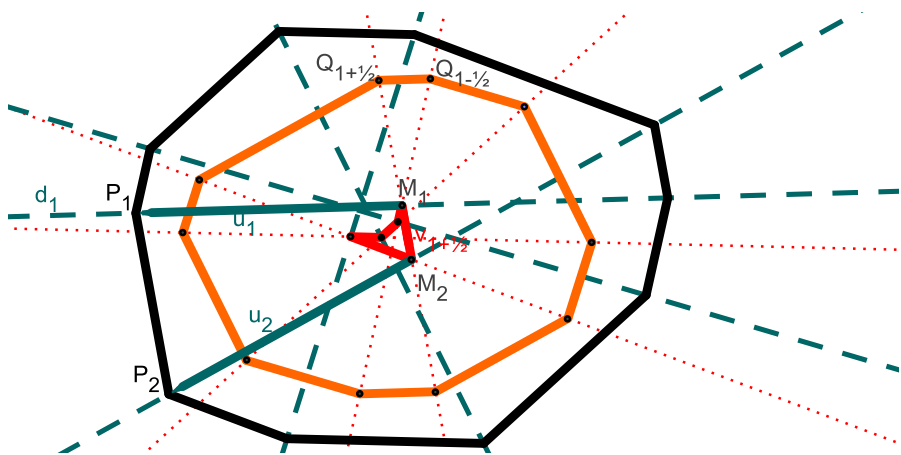}
 \caption{The parallel-diagonal transform. }
\label{PD}
\end{figure}

Denote $v(i+\tfrac{1}{2})=M_{i+1}-M_i$ and  $u_i=P_i-M_i$, $1\leq i\leq n$. Define also $v(i+n+\tfrac{1}{2})=v(i+\tfrac{1}{2})$ and 
$u_{i+n}=-u_i$. Observe that 
\begin{equation}\label{relationuv}
\left[v(i+\tfrac{1}{2}),u_i-u_{i+1}\right]=0 
\end{equation}
and 
\begin{equation}\label{sumvequal0}
\sum_{k=1}^nv(k+\tfrac{1}{2})=0.
\end{equation}

\begin{lemma}\label{lemma:areas}
Denote by $A^1$ and $A^2$ the areas of the regions of the polygon ${\mathcal P}$ bounded by the great diagonal $d_1$.  Then
$$
A^2-A^1=2\sum_{j=1}^n [v(j+\tfrac{1}{2}), u_j]
$$
\end{lemma}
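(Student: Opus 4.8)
The plan is to compute the difference of signed areas directly by integrating around the boundary. First I would set up a convenient affine-invariant area formula: for a closed polygon with vertices $Q_1,\dots,Q_m$, twice the enclosed area equals $\sum_k [Q_k, Q_{k+1}]$, or equivalently, fixing any reference point, $\sum_k [Q_k - O, Q_{k+1} - O]$. The region bounded by $d_1 = P_1P_{n+1}$ splits $\mathcal{P}$ into two pieces: one with boundary $P_1, P_2, \dots, P_{n+1}$ closed up by the segment from $P_{n+1}$ back to $P_1$, and the other with boundary $P_{n+1}, P_{n+2}, \dots, P_{2n}, P_1$ closed up the same way. I would take $M_1 = \tfrac12(P_1 + P_{n+1})$ as the reference point $O$, since it lies on $d_1$ and will make the contribution of the diagonal edge vanish.

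Next I would express everything in terms of the vectors $v(i+\tfrac12) = M_{i+1} - M_i$ and $u_i = P_i - M_i$. Note $P_i - M_1 = (M_i - M_1) + u_i = \sum_{k=1}^{i-1} v(k+\tfrac12) + u_i$, and since $M_{n+1} = M_1$ (because $u_{i+n} = -u_i$ forces $M_{i+n}=M_i$ only after checking — actually $M_{i+n} = \tfrac12(P_{i+n}+P_{i+2n}) = \tfrac12(P_{i+n}+P_i) = M_i$), we get $P_{n+1} - M_1 = u_{n+1} = -u_1$, consistent with $M_1$ bisecting $d_1$. Then $A^1$, twice it, is $\sum_{k=1}^{n}[P_k - M_1, P_{k+1} - M_1]$ where the last term $[P_n - M_1, P_{n+1}-M_1]$ already closes the loop through $P_{n+1}$, and the closing segment $P_{n+1} \to P_1$ contributes $[P_{n+1}-M_1, P_1 - M_1] = [-u_1, u_1] = 0$; similarly $2A^2 = \sum_{k=n+1}^{2n}[P_k - M_1, P_{k+1}-M_1]$ with the closing segment again contributing zero. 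Substituting $P_k - M_1 = S_{k-1} + u_k$ with $S_j := \sum_{\ell=1}^{j} v(\ell+\tfrac12)$ and expanding the bilinear bracket, the purely-$S$ cross terms $\sum [S_{k-1},S_k]$ cancel between the two halves using $v(i+n+\tfrac12) = v(i+\tfrac12)$ and $\sum_{k=1}^n v(k+\tfrac12) = 0$ (equation \eqref{sumvequal0}), and the $[u_k, u_{k+1}]$ terms likewise cancel or sum to zero using $u_{k+n} = -u_k$. What survives is a sum of mixed terms of the form $[S_{k-1}, u_{k+1}] + [u_k, S_k] + [u_k, u_{k+1}]$-type expressions; using relation \eqref{relationuv}, $[v(k+\tfrac12), u_k - u_{k+1}] = 0$, together with telescoping of the $S_j$, these collapse to $A^2 - A^1 = 2\sum_{j=1}^n [v(j+\tfrac12), u_j]$.

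The main obstacle I anticipate is the bookkeeping: keeping the index ranges straight across the two halves of the polygon, correctly handling the two closing segments along $d_1$, and carefully using the three structural identities — $M_{i+n} = M_i$, $u_{i+n} = -u_i$, and both \eqref{relationuv} and \eqref{sumvequal0} — at the right moments to force the unwanted terms to cancel. In particular, showing the $[S_{k-1}, S_k]$ sums over the two halves cancel requires re-indexing the second half by $k \mapsto k+n$ and invoking periodicity of $v$ plus the vanishing of its full sum; a sign error there would flip the final formula. Once the cancellations are organized, the remaining step is a short application of \eqref{relationuv} to rewrite the mixed terms, which is routine.
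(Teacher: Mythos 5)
Your proposal is correct and follows essentially the same route as the paper: the shoelace formula with reference point $M_1$ is exactly the paper's decomposition of each region into triangles $M_1P_jP_{j+1}$, and the subsequent substitution $P_k-M_1=S_{k-1}\pm u_k$, cancellation of the $[S,S]$ and $[u,u]$ terms between the two halves, and final collapse of the mixed terms via \eqref{relationuv} and \eqref{sumvequal0} is the same computation the paper carries out (there organized as a term-by-term difference $\Delta A(j)$ followed by an interchange of the order of summation). The only step you should be careful to execute fully is that last collapse: $\sum_j[S_{j-1},u_j-u_{j+1}]$ does not vanish termwise, and one genuinely needs the double-sum swap plus $[v(n+\tfrac12),u_1]=-[v(n+\tfrac12),u_n]$ to land on the stated factor of $2$.
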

\begin{proof}
Observe that
$$
2A(M_1P_jP_{j+1})=\left[  \sum_{k=1}^{j-1}v(k+\tfrac{1}{2})+u_j,   \sum_{k=1}^{j}v(k+\tfrac{1}{2})+u_{j+1}  \right]
$$
$$
2A(M_1P_{j+n}P_{j+n+1})=\left[  \sum_{k=1}^{j-1}v(k+\tfrac{1}{2})-u_j,   \sum_{k=1}^{j}v(k+\tfrac{1}{2})-u_{j+1}  \right]
$$
and thus the difference between these areas is 
$$
\Delta A(j)=\left[  \sum_{k=1}^{j}v(k+\tfrac{1}{2}),   u_{j}  \right]- \left[  \sum_{k=1}^{j-1}v(k+\tfrac{1}{2}),   u_{j+1}  \right].
$$
So $A^2-A^1$ is given by 
$$
\sum_{j=1}^n\Delta A(j)=\sum_{k=1}^{n-1}\left[ v(k+\tfrac{1}{2}),  \sum_{j=k+1}^n u_j-u_{j+1} \right]+\sum_{j=1}^n\left[ v(j+\tfrac{1}{2}), u_j \right]
$$
$$
= \sum_{k=1}^{n-1}\left[ v(k+\tfrac{1}{2}),   u_{k+1}+u_{1} \right]+\sum_{j=1}^n\left[ v(j+\tfrac{1}{2}), u_j \right]=2\sum_{j=1}^n\left[ v(j+\tfrac{1}{2}), u_j \right],
$$
where we have used \eqref{relationuv} and \eqref{sumvequal0}. 
\end{proof}

Denote by $N(i+\tfrac{1}{2})$ the point of the mid-parallel $m(i+\tfrac{1}{2})$ satisfying 
\begin{equation*}
[N(i+\tfrac{1}{2})-M_i, P_{i+n}-P_i]= \frac{1}{2}\left( A_i^2-A_i^1\right),
\end{equation*}
where $A_i^1$ and $A_i^2$ denote the areas of the regions of the polygon ${\mathcal P}$ bounded by the great diagonal $d_i$. Denote by ${\mathcal N}$ the polygon 
with vertices $N(i+\tfrac{1}{2})$. 

\begin{proposition}
Given a CPOS polygon ${\mathcal P}$, we can associate a CPOS polygon ${\mathcal Q}$ with vertices at the mid-parallel lines  and 
sides parallel to the great diagonals of ${\mathcal P}$. The Area Evolute of ${\mathcal Q}$ is exactly ${\mathcal N}$. 
\end{proposition}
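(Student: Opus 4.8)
The plan is to construct $\mathcal{Q}$ explicitly from the data $v(i+\tfrac{1}{2})$, $u_i$ and the points $N(i+\tfrac{1}{2})$, and then verify the required properties. Since the sides of $\mathcal{Q}$ must be parallel to the great diagonals $d_i = P_iP_{i+n}$, which have direction $P_{i+n}-P_i = -2u_i$ (equivalently direction $u_i$), I would look for a CPOS polygon $\mathcal{Q}$ whose $2n$ vertices $Q_1,\dots,Q_{2n}$ satisfy $Q_{i+1}-Q_i = t_i\, u_i$ for scalars $t_i$, with the opposite-sides condition forcing $Q_{i+n+1}-Q_{i+n} = t_i'\,u_i$ and the closing condition $\sum (Q_{i+1}-Q_i)=0$ over the full $2n$-gon. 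The key constraint is that the midpoint $\tfrac12(Q_i+Q_{i+n})$ must lie on the mid-parallel line $m(i+\tfrac{1}{2})$, which has direction $e(i+\tfrac{1}{2})$ and passes through $M_i$; so the diagonal of $\mathcal{Q}$ joining $Q_i$ to $Q_{i+n}$ is in direction $u_i$ and its midpoint lies on $m(i+\tfrac12)$. In fact the natural guess is that this midpoint is precisely $N(i+\tfrac12)$ — that is the content of the last sentence of the statement, "The Area Evolute of $\mathcal{Q}$ is exactly $\mathcal{N}$" — so I would take the $M_i$-analogue of $\mathcal{Q}$ to be $\mathcal{N}$ and try to build $\mathcal{Q}$ around it.

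Concretely, first I would set up $\mathcal{Q}$ by prescribing its area-evolute vertices to be $N(i+\tfrac12)$ and its "diagonal directions" to be $u_i$, so that $Q_i = N(i+\tfrac12) + s_i u_i$ and $Q_{i+n} = N(i+\tfrac12) - s_i u_i$ for unknown scalars $s_i>0$; this automatically makes $\mathcal{Q}$ centrally-paired (opposite vertices symmetric about $N(i+\tfrac12)$) hence CPOS-candidate, with midpoints exactly $\mathcal{N}$. The sides are then $Q_{i+1}-Q_i = \big(N(i+\tfrac32)-N(i+\tfrac12)\big) + s_{i+1}u_{i+1} - s_i u_i$, and I need this to be parallel to $u_i$ — equivalently parallel to $P_{i+n}-P_i$. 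Writing $N(i+\tfrac12) = M_i + \tau_i v(i+\tfrac12)$-type expansions is not quite right since $N(i+\tfrac12)$ lies on $m(i+\tfrac12)$ which has direction $e(i+\tfrac12)$, not $v(i+\tfrac12)$; so I would instead write $N(i+\tfrac12) = M_i + c_i\, e(i+\tfrac12)$ for scalars $c_i$ determined by the defining area equation, i.e. $c_i [e(i+\tfrac12), P_{i+n}-P_i] = \tfrac12(A_i^2-A_i^1)$, with the right-hand side computed by Lemma~\ref{lemma:areas} (applied at index $i$ rather than $1$). Then $N(i+\tfrac32)-N(i+\tfrac12) = v(i+\tfrac12) + c_{i+1}e(i+\tfrac32) - c_i e(i+\tfrac12)$, and the parallelism condition "$Q_{i+1}-Q_i \parallel u_i$" becomes one scalar equation (pairing with $u_i$, or taking $[\cdot, u_i]=0$) linking $s_i$ and $s_{i+1}$. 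I would solve this linear recurrence for the $s_i$'s and check the consistency/closing condition $s_{i+n}$ corresponds correctly (the cyclic condition after $n$ steps), which is where Lemma~\ref{lemma:areas} and identities \eqref{relationuv}, \eqref{sumvequal0} do the real work.

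The main obstacle I expect is precisely this closing/consistency check: after propagating the recurrence for $s_i$ around the polygon, one must verify that it closes up (that the $2n$-gon $\mathcal{Q}$ actually closes, equivalently that the sequence $s_i$ is consistent after a full cycle), and this is exactly where the specific choice of $N(i+\tfrac12)$ via the area condition is forced — a generic point on $m(i+\tfrac12)$ would not close. I would show the obstruction to closing is a single scalar, compute it using $\sum_k v(k+\tfrac12)=0$ together with the area computation $A_i^2-A_i^1 = 2\sum_j [v(j+\tfrac12),u_j]$ (suitably reindexed for the diagonal $d_i$), and verify it vanishes precisely because $N(i+\tfrac12)$ was defined by that half-area-difference equation. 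Once closing is established, convexity of $\mathcal{Q}$ follows from positivity of the $s_i$ and of the determinants $[u_i,u_j]$ (which hold because the $d_i$ are ordered great diagonals of a convex polygon), and the opposite-sides parallelism is built in by construction; finally, since the midpoints of the great diagonals of $\mathcal{Q}$ are the $N(i+\tfrac12)$ by construction, the Area Evolute of $\mathcal{Q}$ is $\mathcal{N}$. I would also remark that $s_i$ is determined only up to a common additive freedom corresponding to replacing $\mathcal{Q}$ by an equidistant, which is the "uniquely defined up to equidistants" clause mentioned in the introduction.
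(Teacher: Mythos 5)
Your construction is set up inconsistently, and as written it collapses. You require the sides of ${\mathcal Q}$ to satisfy $Q_{i+1}-Q_i=t_i\,u_i$ and simultaneously posit $Q_i=N(i+\tfrac{1}{2})+s_iu_i$, $Q_{i+n}=N(i+\tfrac{1}{2})-s_iu_i$, so that the diagonal $Q_{i+n}-Q_i=-2s_iu_i$ is \emph{also} parallel to $u_i$. Then $Q_i$, $Q_{i+1}$, $Q_{i+n}$ and (by the opposite-sides condition) $Q_{i+n+1}$ are collinear; applying the same observation at index $i+1$ forces $u_{i+1}\parallel u_i$ for every $i$, so the whole polygon degenerates onto a line. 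The source of the error is the sentence ``the diagonal of ${\mathcal Q}$ joining $Q_i$ to $Q_{i+n}$ is in direction $u_i$.'' The proposition asks for the \emph{vertices} of ${\mathcal Q}$ to lie on the mid-parallel lines, and since $m(i+n+\tfrac{1}{2})=m(i+\tfrac{1}{2})$, both $Q_i$ and $Q_{i+n}$ must lie on $m(i+\tfrac{1}{2})$; hence the diagonal of ${\mathcal Q}$ runs \emph{along} the mid-parallel and has direction $e(i+\tfrac{1}{2})$, not $u_i$. (Your construction, which only places the midpoint of that diagonal on $m(i+\tfrac{1}{2})$, would not prove the stated claim even if it did not degenerate.) Relatedly, your parenthetical that an expansion in $v(i+\tfrac{1}{2})$ is ``not quite right'' because $m(i+\tfrac{1}{2})$ has direction $e(i+\tfrac{1}{2})$ rather than $v(i+\tfrac{1}{2})$ is mistaken: $v(i+\tfrac{1}{2})=M_{i+1}-M_i=\tfrac{1}{2}\left(e(i+\tfrac{1}{2})+e(i+n+\tfrac{1}{2})\right)$ is parallel to $e(i+\tfrac{1}{2})$ precisely by the CPOS condition, which is why the paper parametrizes the mid-parallels by $v(i+\tfrac{1}{2})$.

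If you repair the ansatz to $Q_i=N(i+\tfrac{1}{2})+s_i\,v(i+\tfrac{1}{2})$ and $Q_{i+n}=N(i+\tfrac{1}{2})-s_i\,v(i+\tfrac{1}{2})$, and require $Q_{i+1}-Q_i\parallel u_{i+1}$ (note the index shift: the side joining $m(i+\tfrac{1}{2})$ to $m(i+\tfrac{3}{2})$ is the one parallel to $d_{i+1}$), then the rest of your plan---a scalar recurrence for $s_i$ obtained from $[\,\cdot\,,u_{i+1}]=0$, a closing condition verified via \eqref{relationuv}, \eqref{sumvequal0} and Lemma \ref{lemma:areas}, and a choice of the free additive parameter large enough to ensure convexity---is sound, and is essentially the paper's argument run in reverse: the paper starts from an arbitrary point of $m(1-\tfrac{1}{2})$, propagates parallels to the great diagonals, proves closure by a telescoping identity for $\beta(i+\tfrac{1}{2})=\mu(i+\tfrac{1}{2})-\mu(i+n+\tfrac{1}{2})$, and only afterwards identifies the midpoints of the diagonals of ${\mathcal Q}$ with the $N(i+\tfrac{1}{2})$, whereas you would impose those midpoints first and prove $2n$-periodic consistency of the $s_i$. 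As submitted, however, the construction is degenerate and the proof does not go through.
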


\begin{proof}
Start with a point 
$$
Q(1-\tfrac{1}{2})=M_n+\mu v(1-\tfrac{1}{2})\in m(1-\tfrac{1}{2})
$$
and then follow the parallel to the great diagonal $d_1$ until
it reaches the line $m(1+\tfrac{1}{2})$ at $Q(1+\tfrac{1}{2})$. Then take the parallel to $d_2$ until intersect  the line $m(2+\tfrac{1}{2})$ at $Q(2+\tfrac{1}{2})$. Follow this algorithm and stop after $2n$ steps,
when the polygon reaches the line $m(1-\tfrac{1}{2})$ at $Q(2n+\tfrac{1}{2})$. 
We must prove that $Q(2n+\tfrac{1}{2})=Q(1-\tfrac{1}{2}) $. 

Define $\mu(i+\tfrac{1}{2})$,  $1\leq i\leq 2n$, by the relation
$$
Q(i+\tfrac{1}{2})=M_i+\mu(i+\tfrac{1}{2})v(i+\tfrac{1}{2}).
$$
We must then prove that $\mu(2n+\tfrac{1}{2})=\mu(1-\tfrac{1}{2})$, where $\mu(1-\frac{1}{2})=\mu$. Denoting
$q_i=Q(i+\tfrac{1}{2})-Q(i-\tfrac{1}{2})$, $1\leq i \leq 2n$, we have 
\begin{equation*}
q_i=\left(1-\mu(i-\tfrac{1}{2})\right)v(i-\tfrac{1}{2})+\mu(i+\tfrac{1}{2})v(i+\tfrac{1}{2}).
\end{equation*}
Since $q_i$ must be parallel to $u_i$ we obtain
\begin{equation}\label{relationmu}
\mu(i+\tfrac{1}{2})[v(i+\tfrac{1}{2}),u_i]+\left(1-\mu(i-\tfrac{1}{2})\right)[v(i-\tfrac{1}{2}),u_i]=0. 
\end{equation}

Denoting $\beta(i+\tfrac{1}{2})=\mu(i+\tfrac{1}{2})-\mu(i+n+\tfrac{1}{2})$, we get
$$
\beta(i+\tfrac{1}{2})[v(i+\tfrac{1}{2}),u_i]=\beta(i-\tfrac{1}{2})[v(i-\tfrac{1}{2}),u_{i-1}],
$$
where we have used \eqref{relationuv}. We conclude that $\beta(1-\tfrac{1}{2})=-\beta(n+\tfrac{1}{2})$,
thus proving that  $\mu(2n+\tfrac{1}{2})=\mu(1-\tfrac{1}{2})$.

Now let
$$
{\tilde N}(i+\tfrac{1}{2})=\frac{1}{2}\left( Q(i+\frac{1}{2})+ Q(i+n+\frac{1}{2}) \right)=M_i+\frac{1}{2} \gamma(i+\frac{1}{2}) v(i+\frac{1}{2}), 
$$ 
where $\gamma(i+\frac{1}{2})= \mu(i+\frac{1}{2})+ \mu(i+n+\frac{1}{2})$ denote the vertices of the Area Evolute of ${\mathcal Q}$. 
We have 
$$
\left[{\tilde N}(i+\tfrac{1}{2})-M_i, P_{i+n}-P_i\right]=  \gamma(i+\frac{1}{2})[ v(i+\tfrac{1}{2}), u_i].
$$
We claim that ${\tilde N}(i+\tfrac{1}{2})=N(i+\tfrac{1}{2})$, for any $0\leq i\leq n-1$. The proof of this claim will now be given for $i=0$, the other cases being similar. 
It follows from \eqref{relationmu} that
$$
\gamma(i+\frac{1}{2})[ v(i+\tfrac{1}{2}), u_i]=\gamma(i-\frac{1}{2})[ v(i-\tfrac{1}{2}), u_i]+2[ v(i-\tfrac{1}{2}), u_i].
$$
Summing from $i=1$ to $i=n$ we obtain
$$
\gamma(\frac{1}{2})[ v(\tfrac{1}{2}), u_1]=\sum_{j=1}^n [v(j+\tfrac{1}{2}), u_j]=\frac{1}{2} \left( A^2-A^1\right),
$$
where the last equality follows from lemma \ref{lemma:areas}. Thus the claim is proved.  

It remains to show that we can choose $\mu$ such that ${\mathcal Q}$ is convex. Take $\mu$ sufficiently large such that the corresponding ${\mathcal Q}$
contains the Area Evolute of ${\mathcal P}$ in its interior. For such $\mu$, one easily verify that ${\mathcal Q}$ has no self-intersections. Since its sides 
are parallel to $u_i$, $1\leq i\leq n$, which are cyclically ordered, we conclude that ${\mathcal Q}$ is convex. 
\end{proof}

\begin{corollary}\label{cor:halfarea}
Suppose that the line $P_iN(i+\tfrac{1}{2})$ intersects the line $P_{n+i}P_{n+i+1}$ at a point $P_i'$ at the segment $P_{n+i}P_{n+i+1}$. Then $N(i+\tfrac{1}{2})$ 
is the mid-point of the chord $P_iP_i'$ which divides the polygon ${\mathcal P}$ in two regions of equal areas. 
\end{corollary}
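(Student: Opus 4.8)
\emph{The plan.} The idea is to prove the two claims in turn: first that $N(i+\tfrac12)$ is the midpoint of the chord $P_iP_i'$, and then that this chord divides ${\mathcal P}$ into two pieces of equal area. Both follow quickly once one notices that the midpoint of any chord joining $P_i$ to a point of the opposite, parallel side $e(i+n+\tfrac12)$ automatically lies on the mid-parallel line $m(i+\tfrac12)$.

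First I would write $P_i'=P_{i+n}+t\,e(i+n+\tfrac12)$ with $t\in[0,1]$, which is possible because, by hypothesis, $P_i'$ lies on the segment $P_{i+n}P_{i+n+1}$; here $e(i+n+\tfrac12)=P_{i+n+1}-P_{i+n}$. Then
$$
\tfrac12\bigl(P_i+P_i'\bigr)=M_i+\tfrac{t}{2}\,e(i+n+\tfrac12),
$$
and since $e(i+n+\tfrac12)$ is parallel to $e(i+\tfrac12)$, the midpoint of $P_iP_i'$ lies on $m(i+\tfrac12)$. On the other hand $N(i+\tfrac12)$ lies on $m(i+\tfrac12)$ by definition, and on the line $P_iP_i'$ because $P_i'$ was defined as the intersection of the line $P_iN(i+\tfrac12)$ with the line through $P_{i+n}$ and $P_{i+n+1}$. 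The line $P_iP_i'$ is transversal to $m(i+\tfrac12)$: its direction $P_i'-P_i=(P_{i+n}-P_i)+t\,e(i+n+\tfrac12)$ is not a multiple of $e(i+\tfrac12)$, since otherwise $P_{i+n}$ would lie on the support line of the edge $e(i+\tfrac12)$ through $P_i$, contradicting the convexity of ${\mathcal P}$. Hence $m(i+\tfrac12)$ and the line $P_iP_i'$ meet in a single point, so $N(i+\tfrac12)=\tfrac12(P_i+P_i')$; in particular $N(i+\tfrac12)-M_i=\tfrac{t}{2}\,e(i+n+\tfrac12)$.

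For the area statement, note that the chord $P_iP_i'$ and the great diagonal $d_i$ share the endpoint $P_i$ and together bound the triangle $\Delta=P_iP_{i+n}P_i'$. Thus $P_iP_i'$ cuts ${\mathcal P}$ into two regions, one obtained from the $d_i$-region of area $A_i^1$ by adjoining $\Delta$ and the other obtained from the $d_i$-region of area $A_i^2$ by removing $\Delta$ (with $\Delta$ adjoined to the $d_i$-region of smaller area). Hence the two pieces have equal area as soon as $2\,\mathrm{Area}(\Delta)=|A_i^2-A_i^1|$. Now
$$
2\,\mathrm{Area}(\Delta)=\bigl|\,[\,P_{i+n}-P_i,\;P_i'-P_i\,]\,\bigr|=|t|\cdot\bigl|\,[\,P_{i+n}-P_i,\;e(i+n+\tfrac12)\,]\,\bigr|,
$$
while the defining relation of $N(i+\tfrac12)$, combined with $N(i+\tfrac12)-M_i=\tfrac{t}{2}\,e(i+n+\tfrac12)$, gives
$$
\tfrac{t}{2}\,\bigl[\,e(i+n+\tfrac12),\;P_{i+n}-P_i\,\bigr]=\tfrac12\bigl(A_i^2-A_i^1\bigr).
$$
Comparing, $2\,\mathrm{Area}(\Delta)=|A_i^2-A_i^1|$, so each of the two pieces has area $\tfrac12(A_i^1+A_i^2)$, i.e.\ $P_iP_i'$ bisects the area of ${\mathcal P}$.

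\emph{Where the difficulty lies.} The steps above are all routine; the one place that needs care is the orientation bookkeeping — namely, checking that the triangle $\Delta$ is adjoined to the $d_i$-region of smaller area, equivalently that the sign of $A_i^2-A_i^1$ matches the sign of $[\,e(i+n+\tfrac12),\,P_{i+n}-P_i\,]$ once $P_i'$ is known to lie on $e(i+n+\tfrac12)$. This is forced by the orientation conventions of Lemma \ref{lemma:areas} together with the positive orientation of ${\mathcal P}$, and it is really the only subtlety in the argument.
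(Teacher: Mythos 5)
Your overall strategy is the right one, and it is essentially the only sensible way to prove this corollary (the paper itself offers no argument). The first half is complete and correct: writing $P_i'=P_{i+n}+t\,e(i+n+\tfrac{1}{2})$, observing that the midpoint of $P_iP_i'$ is $M_i+\tfrac{t}{2}e(i+n+\tfrac{1}{2})\in m(i+\tfrac{1}{2})$, and using transversality of $P_iP_i'$ with $m(i+\tfrac{1}{2})$ to identify this midpoint with $N(i+\tfrac{1}{2})$ is airtight. The reduction of the area claim to the identity $2\,\mathrm{Area}(\Delta)=A_i^2-A_i^1$ for the triangle $\Delta=P_iP_{i+n}P_i'$ is also correct.

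The gap is exactly the step you set aside as "routine orientation bookkeeping": it is not routine, and "forced by the conventions of Lemma \ref{lemma:areas}" is not a safe claim, because the paper's conventions are internally inconsistent at this point. Concretely: since $\Delta$ sits on the $A_i^2$ side of $d_i$ (the side of $P_{i+n+1}$, which in the labelling implicit in the proof of Lemma \ref{lemma:areas} is the region triangulated by the $M_1P_{j+n}P_{j+n+1}$), the chord bisects the area only if $2\,\mathrm{Area}(\Delta)=A_i^2-A_i^1$ with that sign, i.e.\ only if $A_i^2\geq A_i^1$; your absolute-value identity $2\,\mathrm{Area}(\Delta)=|A_i^2-A_i^1|$ does not suffice, since in the opposite case the two pieces differ by $2|A_i^2-A_i^1|$. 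Now convexity and positive orientation give $[P_{i+n}-P_i,\,e(i+n+\tfrac{1}{2})]>0$, so your relation $\tfrac{t}{2}[e(i+n+\tfrac{1}{2}),P_{i+n}-P_i]=\tfrac{1}{2}(A_i^2-A_i^1)$ together with $t\in[0,1]$ forces $A_i^2\leq A_i^1$ — the wrong inequality. The source of the trouble is a sign slip in the paper: since $P_{i+n}-P_i=-2u_i$, the displayed identity $[\tilde N(i+\tfrac{1}{2})-M_i,\,P_{i+n}-P_i]=\gamma(i+\tfrac{1}{2})[v(i+\tfrac{1}{2}),u_i]$ in the proof of the Proposition should carry a minus sign, and correspondingly the defining relation of $N(i+\tfrac{1}{2})$ should read $[N(i+\tfrac{1}{2})-M_i,\,P_{i+n}-P_i]=\tfrac{1}{2}(A_i^1-A_i^2)$ (with $A_i^1$ the region containing $P_{i+1},\dots,P_{i+n-1}$). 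A worked example confirms this: for the hexagon $P_1=(0,0)$, $P_2=(2,0)$, $P_3=(3,2)$, $P_4=(2,3)$, $P_5=(-1.5,3)$, $P_6=(-2,2)$ one finds $A_1^1=4.5$, $A_1^2=6.75$, and the midpoint of the half-area chord from $P_1$ is $(0.625,1.5)=M_1-(0.375,0)$, whereas the paper's literal formula places $N(1+\tfrac{1}{2})$ at $M_1+(0.375,0)$. So to close your proof you must actually carry out the sign check, and doing so requires fixing (or at least pinning down) the orientation convention in the definition of $N(i+\tfrac{1}{2})$; once that is done, your argument goes through verbatim.
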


\section{Rectified area parallels}

Consider a simple closed curve $\gamma$ that bounds a convex region. 
For $x$ inside ${\gamma}$, there may exist more than one chord $c$ with $x$ as midpoint.  Each chord $c$ divide the interior of ${\gamma}$ in two regions, and we shall denote by $A(x,c)$ the smallest area among these regions. For any $0\leq\lambda\leq\frac{A({\gamma})}{2}$, the area parallel of level $\lambda$ is the set of points $x$ for which there exists $c$ with $x$ as midpoint and $A(x,c)=\lambda$. 
Equivalently, the area parallel of level $\lambda$ is the envelope of chords that cut off an area $\lambda$ from the interior of ${\gamma}$. 

\subsection{Area parallels of a CPOS polygon}\label{sec:structureparallels}

For each pair of edges $e(i+\tfrac{1}{2})$ and $e(j+\tfrac{1}{2})$, denote by $M(i+\tfrac{1}{2},j+\tfrac{1}{2})$ the parallelogram 
consisting of the mid-points of the edges, as in section \ref{sec:mid-point}.
The area parallels of the wedge bounded by the support lines of $e(i+\tfrac{1}{2})$ and $e(j+\tfrac{1}{2})$
are hyperbolas, and by intersecting any of these hyperbolas with $M(i+\tfrac{1}{2},j+\tfrac{1}{2})$ we obtain an arc of hyperbola $H(i+\tfrac{1}{2},j+\tfrac{1}{2},\lambda)$
that is part of the area parallel of level $\lambda$ of ${\mathcal P}$.

An area parallel $H(i+\tfrac{1}{2},j+\tfrac{1}{2},\lambda)$ necessarily intersects the boundary of the parallelogram $M(i+\tfrac{1}{2},j+\tfrac{1}{2})$ at two points. 
Next lemma describes the line segments $L(\lambda)=L(i+\tfrac{1}{2},j+\tfrac{1}{2},\lambda)$ connecting these two points.

Let $A,B,C,D$ and $E$ denote the midpoints of $P_{i+1}P_j$, $P_{i}P_j$, $P_{i+1}P_{j+1}$, $P_{i}P_{j+1}$ and $P_{i}P_{i+1}$, respectively. 
Assuming that the area parallel through $B$ intersects the segment $AC$, denote by $F$ this intersection. Then the area parallel through $C$ intersects
$BD$ at a point $G$ (see figure \ref{bilinear}).

\begin{figure}[htb]
 \centering
 \includegraphics[width=1.0\linewidth]{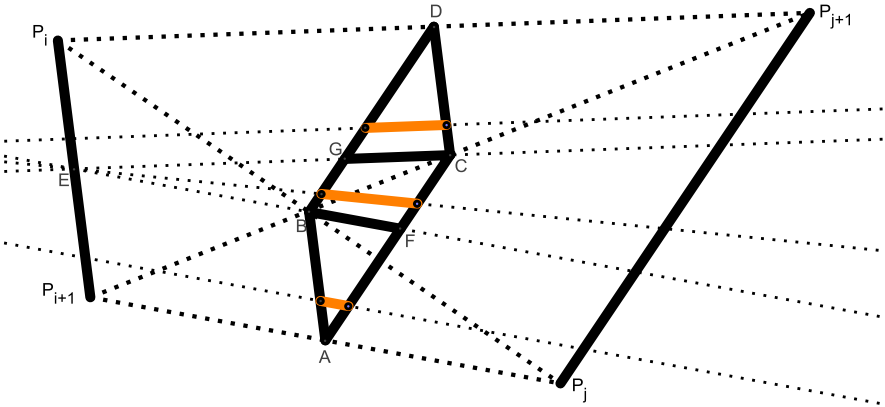}
 \caption{Area parallels inside a parallelogram.}
\label{bilinear}
\end{figure}

\begin{lemma}
If the segment $L(\lambda)$ passes through $BG$, then its support line passes
through $E$. If $L(\lambda)$ passes through $GD$, then it is parallel to $P_{i}P_{j+1}$. If $L(\lambda)$ passes through
$AB$, then it is parallel to $P_{i+1}P_{j}$.
\end{lemma}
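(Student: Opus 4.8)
The plan is to work in an affine coordinate system adapted to the wedge. Put the origin at $P(i+\tfrac12,j+\tfrac12)$, the intersection of the support lines of $e(i+\tfrac12)$ and $e(j+\tfrac12)$, and take the directions of those two lines as coordinate axes, oriented so that $P_i=(p_i,0)$, $P_{i+1}=(p_{i+1},0)$, $P_j=(0,p_j)$, $P_{j+1}=(0,p_{j+1})$ with all four numbers positive. A chord from $(\xi,0)$ to $(0,\eta)$ cuts off from the wedge a triangle whose area is a fixed multiple of $\xi\eta$; hence in these coordinates the area parallel of the wedge at level $\lambda$ is a hyperbola $xy=k$, $k=k(\lambda)>0$ (indeed the midpoints $(\tfrac\xi2,\tfrac\eta2)$ of the chords with $\xi\eta$ constant satisfy $xy=\tfrac14\xi\eta$), and $H(i+\tfrac12,j+\tfrac12,\lambda)$ is the arc of it lying in $M(i+\tfrac12,j+\tfrac12)$. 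Then $A=(\tfrac{p_{i+1}}2,\tfrac{p_j}2)$, $B=(\tfrac{p_i}2,\tfrac{p_j}2)$, $C=(\tfrac{p_{i+1}}2,\tfrac{p_{j+1}}2)$, $D=(\tfrac{p_i}2,\tfrac{p_{j+1}}2)$, $E=(\tfrac{p_i+p_{i+1}}2,0)$; the edges $AB$, $AC$, $CD$, $BD$ of the parallelogram lie on the lines $y=\tfrac{p_j}2$, $x=\tfrac{p_{i+1}}2$, $y=\tfrac{p_{j+1}}2$, $x=\tfrac{p_i}2$; and the lines $P_{i+1}P_j$, $P_iP_{j+1}$ are $\tfrac{x}{p_{i+1}}+\tfrac{y}{p_j}=1$, $\tfrac{x}{p_i}+\tfrac{y}{p_{j+1}}=1$.

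The whole argument then rests on one elementary fact: a secant of the hyperbola $xy=k$ through the points $(a,\tfrac ka)$ and $(b,\tfrac kb)$ has equation $\tfrac{x}{a+b}+\tfrac{y}{k/a+k/b}=1$; in particular it meets the two coordinate axes at $(a+b,0)$ and $(0,\tfrac ka+\tfrac kb)$. I would record this first (it is verified by substituting the two points). Since $BG$ and $GD$ lie on the edge $BD$, i.e. on the line $x=\tfrac{p_i}2$, and $FC$ lies on $AC$, i.e. on $x=\tfrac{p_{i+1}}2$, the hypotheses of the lemma determine in each of its three cases which two edges of the parallelogram carry the endpoints of $L(\lambda)$ — this is where the positions of $F$ and $G$ enter: an endpoint on $AB$ forces the other on $AC$, an endpoint on $BG$ forces the other on $FC$, and an endpoint on $GD$ forces the other on $CD$.

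Each case is then a single application of that fact. If the endpoints of $L(\lambda)$ lie on $AB$ and $AC$, they are $\bigl(\tfrac{2k}{p_j},\tfrac{p_j}2\bigr)$ and $\bigl(\tfrac{p_{i+1}}2,\tfrac{2k}{p_{i+1}}\bigr)$, so the axis intercepts of $L(\lambda)$ are $\tfrac{2k}{p_j}+\tfrac{p_{i+1}}2$ and $\tfrac{p_j}2+\tfrac{2k}{p_{i+1}}$, and the identity $p_j\bigl(\tfrac{2k}{p_j}+\tfrac{p_{i+1}}2\bigr)=p_{i+1}\bigl(\tfrac{p_j}2+\tfrac{2k}{p_{i+1}}\bigr)$ shows they are in the ratio $p_{i+1}:p_j$, whence $L(\lambda)\parallel P_{i+1}P_j$. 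If the endpoints lie on $BG$ and $FC$, both have first coordinate $\tfrac{p_i}2$ or $\tfrac{p_{i+1}}2$, so the $x$-intercept of $L(\lambda)$ is $\tfrac{p_i}2+\tfrac{p_{i+1}}2$, i.e. $L(\lambda)$ passes through $E$. If the endpoints lie on $GD$ and $CD$, they are $\bigl(\tfrac{p_i}2,\tfrac{2k}{p_i}\bigr)$ and $\bigl(\tfrac{2k}{p_{j+1}},\tfrac{p_{j+1}}2\bigr)$, and the analogous identity $p_{j+1}\bigl(\tfrac{p_i}2+\tfrac{2k}{p_{j+1}}\bigr)=p_i\bigl(\tfrac{2k}{p_i}+\tfrac{p_{j+1}}2\bigr)$ yields $L(\lambda)\parallel P_iP_{j+1}$. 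Configurations of the parallelogram other than the one shown in Figure~\ref{bilinear} are handled identically after relabelling. I do not anticipate a genuine obstacle: the substance is in setting up the coordinates and normalizing the hyperbolas as $xy=k$; after that each of the three assertions collapses to the one-line intercept identity above.
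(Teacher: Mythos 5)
Your proof is correct and follows the same route as the paper: place the two support lines on the coordinate axes so that the area parallels become hyperbolas $xy=k$, and then the three assertions reduce to the intercept computation for a secant of $xy=k$ (which is exactly the ``straightforward calculation'' the paper leaves implicit). The only part the paper does not even mention --- identifying which pair of parallelogram edges carries the endpoints of $L(\lambda)$ in each of the three cases --- you state correctly and it follows from the monotonicity of the family $xy=k$ across the corners $B$, $C$ of intermediate level.
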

\begin{proof}
We may assume that the segment $P_iP_{i+1}$ is contained in the $y$-axis and the segment $P_jP_{j+1}$ is contained in the $x$-axis. In this case the area 
at a point $(x,y)$ is $xy$. Then straightforward calculations prove the lemma.
\end{proof}

\subsection{Rectified area parallels and its cusps}

We shall call {\it rectified area parallel} of level $\lambda$ 
the union of all segments $L(i+\tfrac{1}{2},j+\tfrac{1}{2},\lambda)$. Thus the rectified area parallels are closed polygons (see figure \ref{AreaParallel} and Applets). 

\begin{figure}[htb]
 \centering
 \includegraphics[width=1.0\linewidth]{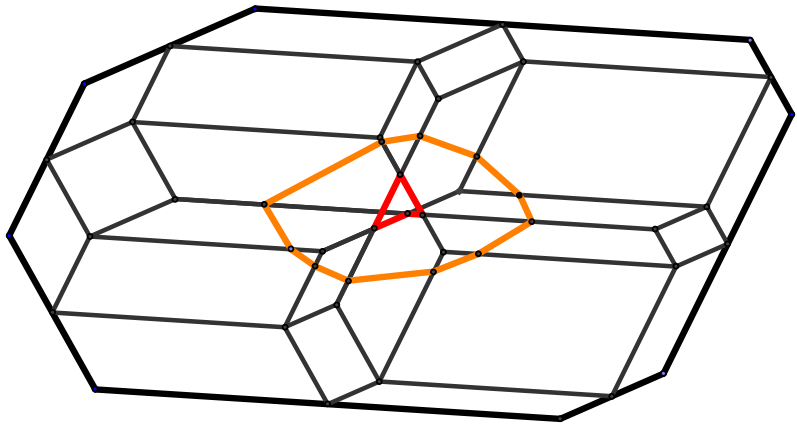}
 \caption{A rectified area parallel of a parallel opposite sides octogon. }
\label{AreaParallel}
\end{figure}

A rectified area parallel has its vertices at the boundaries of the parallelograms $M(i-\tfrac{1}{2},j-\tfrac{1}{2})$. Consider a vertex 
$x$ at the boundary $(i-\tfrac{1}{2},j)$. The rectified area parallel passing through $x$ at the parallelogram $M(i-\tfrac{1}{2},j+\tfrac{1}{2})$ will continue the initial polygonal line 
unless, $j=i-1+n$, i.e.,  $e(i-\tfrac{1}{2})$ is parallel to $e(j+\tfrac{1}{2})$.
In the latter case, the continuation of the rectified area parallel may occur at the same side of $m(i-\frac{1}{2})$ or on the other side. We call $x$ a cusp when the continuation
of the area parallel occurs at the same side of $m(i-\frac{1}{2})$. We have the following proposition:

\begin{proposition}\label{prop:cuspsareaparallels}
A vertex of a rectified area parallel is a cusp if and only if it belongs to the area evolute. 
\end{proposition}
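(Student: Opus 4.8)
The plan is to reduce the statement to a single geometric fact about chords joining a fixed pair of opposite parallel edges, and then to identify the two parallelogram--segments of the rectified area parallel that meet at the vertex in question.

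\textbf{Step 1: the key lemma.} By the discussion preceding the statement, the notion of cusp is only defined at a vertex $x$ sitting at a ``degenerate'' parallelogram $M(i-\tfrac{1}{2},i+n-\tfrac{1}{2})$, i.e.\ when $x$ is the midpoint of an area-$\lambda$ chord whose two endpoints lie on the opposite (anti)parallel edges $e(i-\tfrac{1}{2})$ and $e(i+n-\tfrac{1}{2})$. So I would first prove: \emph{every chord with one endpoint on the support line of $e(i-\tfrac{1}{2})$ and one on the support line of $e(i+n-\tfrac{1}{2})$ has its midpoint on the mid-parallel line $m(i-\tfrac{1}{2})$, and the area it cuts from $\mathcal P$ is an affine, strictly monotone function of that midpoint as it runs along $m(i-\tfrac{1}{2})$.} In coordinates where the two parallel lines are $x=0$ and $x=h$, a chord from $(0,y_1)$ to $(h,y_2)$ has midpoint $\big(h/2,(y_1+y_2)/2\big)\in\{x=h/2\}=m(i-\tfrac{1}{2})$; since an infinitesimal displacement of either endpoint sweeps a triangle of base $|dy|$ and height $h$, the cut area $A$ satisfies $\partial A/\partial y_1=\partial A/\partial y_2=\pm h/2$, hence $A=\text{const}\mp\tfrac{h}{2}(y_1+y_2)$ depends on $(y_1,y_2)$ only through the midpoint, with nonzero slope.

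\textbf{Step 2: locating the vertex.} It then follows that for each admissible $\lambda$ there is a unique such midpoint $x_\lambda$, that this $x_\lambda$ is precisely the vertex of the rectified area parallel at the degenerate parallelogram, and that $x_\lambda$ moves monotonically along $m(i-\tfrac{1}{2})$ with $\lambda$. The great diagonals $d_{i-1}=P_{i-1}P_{i+n-1}$ and $d_i=P_iP_{i+n}$ are chords of exactly this type (their endpoints are the corners of $e(i-\tfrac{1}{2})$ and $e(i+n-\tfrac{1}{2})$), so $M_{i-1}$ and $M_i$ occur among the $x_\lambda$, and by monotonicity $x_\lambda$ lies on the closed segment $[M_{i-1},M_i]$ --- i.e.\ on the area evolute --- exactly when $\lambda$ lies between the two areas cut off by $d_{i-1}$ and $d_i$.

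\textbf{Step 3: the two incident parallelograms.} The rectified area parallel reaches $x_\lambda$ through a segment $L^-$ in a genuine parallelogram $M^-$ and leaves through a segment $L^+$ in a genuine parallelogram $M^+$; by definition $x_\lambda$ is a cusp iff $L^-$ and $L^+$ leave $x_\lambda$ on the same side of $m(i-\tfrac{1}{2})$, equivalently iff $M^-$ and $M^+$ lie on the same side of $m(i-\tfrac{1}{2})$. There are four parallelograms surrounding the degenerate one: $M(i-\tfrac{1}{2},i+n-\tfrac{3}{2})$ and $M(i-\tfrac{1}{2},i+n+\tfrac{1}{2})$, which still have $e(i-\tfrac{1}{2})$ as a side, and $M(i-\tfrac{3}{2},i+n-\tfrac{1}{2})$ and $M(i+\tfrac{1}{2},i+n-\tfrac{1}{2})$, which have $e(i+n-\tfrac{1}{2})$ as a side. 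From
$[\mathrm{mid}(y_1,y_2)-M_{i-1},e(i-\tfrac{1}{2})]=\tfrac{1}{2}[y_1-P_{i-1},e(i-\tfrac{1}{2})]+\tfrac{1}{2}[y_2-P_{i+n-1},e(i-\tfrac{1}{2})]$
together with the fact that the convex, positively oriented $\mathcal P$ lies on a fixed side of the support line of $e(i-\tfrac{1}{2})$ and on a fixed side of that of $e(i+n-\tfrac{1}{2})$, one obtains: a chord with an endpoint on $e(i-\tfrac{1}{2})$ has midpoint on one fixed side of $m(i-\tfrac{1}{2})$, and a chord with an endpoint on $e(i+n-\tfrac{1}{2})$ has midpoint on the other fixed side. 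Hence the first pair of parallelograms lies on one side of $m(i-\tfrac{1}{2})$ and the second pair on the other.

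\textbf{Step 4: conclusion and the main obstacle.} It remains to see which two of the four surrounding parallelograms are $M^-$ and $M^+$, which one reads off by following the rotating area-$\lambda$ chord through the configuration realizing $x_\lambda$ --- equivalently, tracking where the line $s-\beta t=\text{const}$ (with $e(i+n-\tfrac{1}{2})=-\beta\,e(i-\tfrac{1}{2})$) exits the parameter square of $e(i-\tfrac{1}{2})\times e(i+n-\tfrac{1}{2})$. A short case check shows that $M^-$ and $M^+$ both lie in the first pair, or both in the second pair, exactly when $x_\lambda$ is between $M_{i-1}$ and $M_i$, and that otherwise one lies in each pair. Combined with Step 3 this gives $x_\lambda$ a cusp $\iff x_\lambda\in[M_{i-1},M_i]\iff x_\lambda$ on the area evolute. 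I expect the real content to be the key lemma of Step 1 --- that for chords joining a fixed pair of opposite parallel edges the cut area depends only on the midpoint, so the unrectified area parallel collapses to a point there; everything after that is bookkeeping (matching the degenerate vertex with the great-diagonal midpoints, plus a handful of $2\times 2$ determinant sign checks), where the main danger is an index- or orientation-keeping slip, which the coordinate normalization used in the lemma just above the statement keeps under control.
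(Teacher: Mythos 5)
Your proof is correct and follows essentially the same route as the paper: the paper's own argument simply points back to cases 3 and 4 of the proof of Proposition \ref{prop:midpoints} (figures \ref{MidPoint1} and \ref{MidPoint2}), which is exactly your Steps 3--4 determination of which of the four neighbouring parallelograms the curve continues into and on which side of $m(i-\tfrac{1}{2})$ they lie. Your Step 1 --- that for chords joining the two parallel support lines the cut-off area is an affine, strictly monotone function of the common midpoint on $m(i-\tfrac{1}{2})$ --- makes explicit the link between the area parallels and the midpoint analysis that the paper leaves implicit, and is a worthwhile addition.
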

\begin{proof}
There are two cases to consider: if $x$ is not in the area evolute, then the rectified area parallel will continue in the parallelogram $M(i+\tfrac{1}{2},i+n-\tfrac{1}{2})$ and thus $x$ is not a cusp (see figure \ref{MidPoint1}). 
If $x$ is in the area evolute, the rectified area parallel will continue in the parallelogram $M(i-\tfrac{1}{2},i+n+\tfrac{1}{2})$ (see figure \ref{MidPoint2}). Close to $x$, the area parallel is contained in the same 
side of $m(i-\tfrac{1}{2})$ and thus $x$ is a cusp point. 
\end{proof}

\subsection{Rectified area symmetry set}

The {\it rectified area symmetry set} is the locus of self-intersections of the rectified area parallels. It is not easy to describe this set for a general CPOS polygon, so we shall make a simplifying assumption. Denote by ${\mathcal Q}_{\mu}$ the family of equidistants of the parallel-diagonal transform ${\mathcal Q}$ of ${\mathcal P}$. 
We say that ${\mathcal P}$ is {\it almost symmetric} if there exists $\mu_0$ such that ${\mathcal Q}_{\mu_0}$ contains the Area Evolute in its interior
and every mid-point of $1$-diagonals are outside it (see figure \ref{AlmostSym}).

\begin{figure}[htb]
 \centering
 \includegraphics[width=1.0\linewidth]{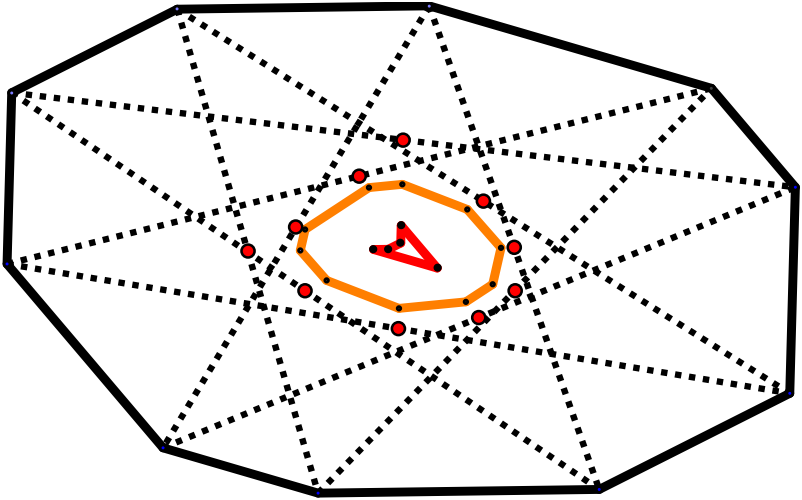}
 \caption{Area evolute, an equidistant of ${\mathcal Q}$ and the $10$ midpoints of $1$-diagonals (circles). The polygon ${\mathcal P}$ is almost symmetric. }
\label{AlmostSym}
\end{figure}

Under the almost symmetry hypothesis, the polygons ${\mathcal Q}_{\mu}$, $\mu\geq\mu_0$, are rectified area parallels of ${\mathcal P}$. In fact, it follows from \ref{sec:structureparallels} that each edge of a rectified area parallel is
parallel to the corresponding great diagonal. In particular, using corollary \ref{cor:halfarea}, we have that the rectified area parallel of level $\frac{A({\mathcal P})}{2}$ is exactly the Area Evolute of ${\mathcal Q}$. 

\begin{proposition}
Under the almost symmetry hypothesis, the Rectified Area Symmetry Set of ${\mathcal P}$ coincides with the Equidistant Symmetry Set of ${\mathcal Q}$.
\end{proposition}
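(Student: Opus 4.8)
The plan is to deduce the proposition from the identification, recorded in the paragraph above, of the equidistants of ${\mathcal Q}$ with the rectified area parallels of ${\mathcal P}$, and then to check in addition that neither family produces a self-intersection outside the range in which the two families agree. Write $R_{\lambda}$ for the rectified area parallel of ${\mathcal P}$ of level $\lambda\in(0,A({\mathcal P})/2]$ (so $R_{\lambda}=R_{A({\mathcal P})-\lambda}$), and let $\lambda_0$ be the level of ${\mathcal Q}_{\mu_0}$. Since $Q_i(\mu)+Q_{i+n}(\mu)=Q_i+Q_{i+n}$ for all $\mu$, every equidistant of ${\mathcal Q}$ has the same great-diagonal midpoints and hence the same area evolute ${\mathcal N}={\mathcal Q}_{1/2}$; by Corollary \ref{cor:halfarea} and the fact that each edge of a rectified area parallel is parallel to the corresponding great diagonal of ${\mathcal P}$, this gives ${\mathcal N}=R_{A({\mathcal P})/2}$. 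The level $\lambda(\mu)$ of ${\mathcal Q}_{\mu}$ varies continuously with $\mu$, equals $\lambda_0$ at $\mu=\mu_0$, equals $A({\mathcal P})/2$ at $\mu=1/2$, and, the family being nested, does not drop below $\lambda_0$ on $[\mu_0,1/2]$; together with the quoted inclusion and the identity ${\mathcal Q}_{\mu}={\mathcal Q}_{1-\mu}$ (valid for every $\mu$, since $Q_i(1-\mu)=Q_{i+n}(\mu)$), this yields the equality of families
$$
\mathcal F:=\{\,{\mathcal Q}_{\mu}:\ \mu_0\leq\mu\leq 1-\mu_0\,\}=\{\,R_{\lambda}:\ \lambda_0\leq\lambda\leq A({\mathcal P})/2\,\}.
$$

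It then suffices to prove that every member of either family lying outside $\mathcal F$ is a simple polygon: granting this, both the Equidistant Symmetry Set of ${\mathcal Q}$ and the Rectified Area Symmetry Set of ${\mathcal P}$ equal the union of the self-intersection loci of the polygons in $\mathcal F$, and so coincide. For a rectified area parallel $R_{\lambda}$ with $\lambda<\lambda_0$ I would argue as follows: the polygon ${\mathcal Q}_{\mu_0}=R_{\lambda_0}$ contains the Area Evolute of ${\mathcal P}$ in its interior, so by nestedness $R_{\lambda}$ does too, hence by Proposition \ref{prop:cuspsareaparallels} it has no cusp; and a $2n$-gon whose edges are parallel to the cyclically ordered great diagonals $d_1,\dots,d_n$ of ${\mathcal P}$ and which has no cusp is convex, hence simple. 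For an equidistant ${\mathcal Q}_{\mu}$ with $\mu\notin[\mu_0,1-\mu_0]$: if $\mu\notin[0,1]$ then ${\mathcal Q}_{\mu}$ is convex because its $2n$ edge vectors are all positive (for $\mu<0$), resp.\ all negative (for $\mu>1$), multiples of $u_1,\dots,u_n,-u_1,\dots,-u_n$, and these directions are cyclically ordered since $u_i\parallel d_i$; and if $\mu\in[0,\mu_0)\cup(1-\mu_0,1]$ then by nestedness (and ${\mathcal Q}_{\mu}={\mathcal Q}_{1-\mu}$ in the second case) ${\mathcal Q}_{\mu}$ contains ${\mathcal Q}_{\mu_0}$, hence the Area Evolute of ${\mathcal P}$, in its interior, so it meets no point of the Central Symmetry Set of ${\mathcal Q}$ (which by the parallel-diagonal construction equals the Area Evolute of ${\mathcal P}$), whence ${\mathcal Q}_{\mu}$ has no cusp and is convex. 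Thus every polygon of either family outside $\mathcal F$ is simple, which completes the reduction.

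The point I expect to be the main obstacle is making the two nestedness assertions fully rigorous — and with them the claim that every edge of $R_{\lambda}$ for $\lambda\leq\lambda_0$ is parallel to a great diagonal — since this is precisely where the full strength of the almost symmetry hypothesis enters. By Cases 3 and 4 in the proof of Proposition \ref{prop:midpoints} and the Lemma on the segments $L(i+\tfrac{1}{2},j+\tfrac{1}{2},\lambda)$ of Section \ref{sec:structureparallels}, the combinatorial type of a rectified area parallel at a ``thin'' parallelogram $M(i+\tfrac{1}{2},i+n-\tfrac{1}{2})$ is governed by where the segment $L(\lambda)$ lies relative to the two distinguished vertices of that parallelogram, namely $M_i$ (a vertex of the Area Evolute of ${\mathcal P}$) and a midpoint of a $1$-diagonal; the requirement that ${\mathcal Q}_{\mu_0}$ contain the Area Evolute of ${\mathcal P}$ in its interior while the midpoints of the $1$-diagonals lie outside it is exactly what forces $R_{\lambda}$, for all $\lambda\leq\lambda_0$, to be of the ``developed'' type in which each edge is parallel to a great diagonal and which shrinks monotonically with $\lambda$. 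Once this bookkeeping, together with its analogue for the equidistants of ${\mathcal Q}$ near $\mu=\mu_0$, is in place, the two reductions above apply and the proof is complete.
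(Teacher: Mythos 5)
Your proposal is correct and follows the same strategy as the paper's proof: identify the two families on the range where they agree, and then show that every member of either family outside that range is a simple polygon, so that all self-intersections come from the common family. Where you go beyond the paper is in checking the second half of this for the equidistants of ${\mathcal Q}$: the paper only argues that the rectified area parallels of level below $\lambda_0$ contain the Area Evolute and hence (by Proposition \ref{prop:cuspsareaparallels}) have no self-intersections, and then immediately concludes equality of the two symmetry sets; it never addresses whether ${\mathcal Q}_{\mu}$ for $\mu\in[0,\mu_0)\cup(1-\mu_0,1]$ or $\mu\notin[0,1]$ could contribute extra points to the Equidistant Symmetry Set. Your convexity argument for those equidistants (all edge vectors positive, resp.\ negative, multiples of the cyclically ordered directions $u_1,\dots,u_n,-u_1,\dots,-u_n$ when $\mu\notin[0,1]$; no cusps because the polygon avoids the Central Symmetry Set of ${\mathcal Q}$, which is the Area Evolute of ${\mathcal P}$, when $\mu\in[0,\mu_0)$) closes that loophole and is a genuine improvement. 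The nestedness and ``each edge of $R_{\lambda}$ is parallel to a great diagonal'' assertions that you flag as the remaining obstacle are indeed the delicate points, but the paper leaves them equally implicit (they are asserted without proof in the paragraph preceding the proposition), so your proposal is at least as complete as the published argument.
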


\begin{proof}
We first observe that, as a consequence of proposition \ref{prop:cuspsareaparallels}, if a rectified area parallel contains the Area Evolute in its interior, then it has no self-intersections. 
Thus, by the almost symmetry hypothesis, the self-intersections of rectified area parallels occur only
for those levels corresponding to ${\mathcal Q}_{\mu}$, $\mu\geq\mu_0$. We conclude that the Rectified Area Symmetry Set of ${\mathcal P}$ coincides with the Equidistant Symmetry Set of ${\mathcal Q}$.
\end{proof}

%%%%%%%%%%%%%%%%%%%%%%%%%%%%%%%%%%%%%%%%%%%%%%%%%%%%%%%%%%%%%%%%%%%%%%%%%%%%%%%%

\end{document}